\theoremstyle{plain}
\newtheorem{thm}{Theorem}[section]
\newtheorem{cor}[thm]{Corollary}
\newtheorem{lem}[thm]{Lemma}
\newtheorem{prop}[thm]{Proposition}
\newtheorem{exam}[thm]{Example}
\newtheorem{rem}[thm]{Remark}
\newtheorem{Def}[thm]{Definition}
\begin{document}



\subjclass{Primary 47A20, 47A63. Secondary 11C08, 11G50}



\title{Operator analogues of Mahler's measure}


%

\author{Kunyu Guo}
\address{ School of Mathematical Sciences
Fudan University, 200433-Shanghai, China
}

\email{kyguo@fudan.edu.cn}

\author{Jiayang Yu}
\address{ School of Mathematical Sciences
Fudan University, 200433-Shanghai, China
}

\email{j.yu520986@gmail.com}

%

\thanks{This work is partially supported by Laboratory of Mathematics for Nonlinear Science, Fudan University. The second author was partially supported by NSFC ( 11271075).}

%

\begin{abstract}
Motivated by a geometric meaning of Mahler's
measure, we introduce two operator analogues of Mahler's measure. This leads to  some interesting equalities and inequalities between
the two operator-theoretic Mahler measures and the classical Mahler measure.
In order to apply these results to the operator version of Lehmer's problem, we introduce and study an important class of operators, the so-called  subharmonic operators.
 It is shown that the operator version of Lehmer's problem fails
under some mild condition.
\end{abstract}

\maketitle


\section{Introduction}

 Let $\mathbb{Z}[z]$ and $\mathbb{C}[z]$ denote  the polynomial rings in $z$ with integer and complex
 coefficients, respectively. Denote the open unit disk by
 $\mathbb{D}$, and the unit circle by $\mathbb{T}$. In this paper, $H $ always denotes a Hilbert space, and $B(H)$ denotes the set of
  all  linear bounded operators acting on  $H$.

In order to manufacture large primes, Lehmer paid his attention to
monic integral polynomial
 $$
 p(z)=z^d+a_{d-1}z^{d-1}+\ldots+a_1 z+a_0\in \mathbb{Z}[z].
 $$
Decompose $p(z)$ on $\mathbb{C}$ as
 $$
  p(z)=\prod_{i=1}^{d}(z-\alpha_i),
 $$
 and   define
 $$
 \Delta_n(p)=\prod_{i=1}^{d}(\alpha_i^n-1), \, n= 1,2,\cdots.
 $$
 Since $p(z)$ is a monic integral polynomial, it is easy to see that $\Delta_n(p)\in \mathbb{Z}$. The function $\Delta_n(p)$ was introduced by  Pierce \cite{Pi}.
 In 1933,  Lehmer \cite{Le} proved that $\Delta_n(p)$ is more likely to produce primes if it does not grow too quickly.
 Let $\Omega(p)$ be the absolute value of the product of those roots of $p$ which lie outside the unit circle.
 If $p$ has no root on the unit circle, then $\lim\limits_{n\to \infty}|\frac{\Delta_{n+1}(p)}{\Delta_{n}(p)}|=\Omega(p)$.
 Thus for any monic integral polynomial $p$, Lehmer used $\Omega(p)$ to measure the rate of growth of the
 sequence $\{\Delta_{n}(p)\}_{n=1}^{\infty}$.
It is clear that $\Omega(p)\geq 1$. Lehmer noticed the polynomial
  $$
  L(z)=z^{10}+z^9-z^7-z^6-z^5-z^4-z^3+z+1
  $$
 with $\Omega(L)=1.176280\cdots$. However, he
  failed to find a monic integral polynomial $p$ such that $1<\Omega(p)<\Omega(L)$.
Then he asked if for every $\epsilon>0$ there exists a monic
polynomial $p\in\mathbb{Z}[z]$ satisfying
 $1<\Omega(p)<1+\epsilon$. This is known as``Lehmer's problem" or ``Lehmer's conjecture",
 which remains to be an open problem.

 Thirty years after Lehmer's paper \cite{Le}, Mahler gave
 a generalized definition of $\Omega(p)$. For a  nonzero polynomial
 $$
  p(z)=a_d z^d+a_{d-1}z^{d-1}+\ldots+a_0=a_d\prod_{i=1}^{d}(z-\alpha_i)\in \mathbb{C}[z],
 $$
 he defined
 $$
 M(p)=|a_d|\cdot \prod_{i=1}^{d}\max \{1,|\alpha_i|\}.
 $$
 $M(p)$ is called the Mahler measure of $p$.
Observe that $M(p)\geq 1$ for each $p\in\mathbb{Z}[z]$ and when
$a_d=1,\ M(p)=\Omega(p)$. By a classical theorem of Kronecker
\cite[p.27, Theorem 1.31]{EW} \cite{Kr}, for any $p\in\mathbb{Z}[z]$,
  $M(p)=1$ if and only if
    $p(z)=z^n q(z)$ for some nonnegative integer $n$ and a
    cyclotomic polynomial $q$. Recall that a {\it cyclotomic
    polynomial} is a monic integral polynomial all of whose zeros
    are roots of unity. Thus Lehmer's problem is equivalent to the
    question: Is there a sequence of non-cyclotomic
integral polynomial $\{p_n\}$ with $p_n(0)\neq0$ for all $n$ such that
\begin{eqnarray}
\lim_{n\to
\infty} M(p_n) =1?\label{q1.1}
\end{eqnarray}

Lehmer's problem and Mahler's measure arise in different areas of
mathematics, for example, iteration of complex functions,
transcendence and diophantine approximation theory, Fuglede-Kadison
determinant in operator algebra \cite{De}, ergodic theory \cite{Li}, knot theory \cite{Hi}, and
etc. See \cite{Sm} for survey of Lehmer's problem, and also refer to
 \cite{Mo}.

An important observation was also made in Mahler's paper
\cite{Ma60}:

$$
M(p)=\exp\bigg[\frac{1}{2\pi}\int_{-\pi}^{\pi}\log |p(e^{i\theta
})|\,\mathrm{d}\theta\bigg]
,\ \ \ \ \  p\,  \in\mathbb{C}[z].
$$
This means that $M(p)$ is multiplicative. Combining  Szeg\"o 's
theorem, one can give a geometric meaning of Mahler's measure in the
context of Hilbert space, that is,
\begin{eqnarray}
M(p)=\text{dist}(p,[z
p])=\inf_{q\in\mathbb{C}[z]}\|[1-q(z)z] p(z) \|_{H^{2}}, \ \,
 p  \in\mathbb{C}[z].\label{1.1}
\end{eqnarray}
Here $$H^{2}=\{f: f\in L^2(\mathbb{T}), \widehat{f}(n)=0\text{ for
all }n<0\}$$ is the Hardy space on the unit circle $\mathbb{T}$ and $[z p]$ denotes the closed invariant subspace of the Hardy
shift generated by $zp$, that is,
$$[z p]=\text{cl}\{q(z)zp(z),q\in \mathbb{C}[z]\}.$$ In (\ref{1.1}), the equality
$M(p)=\inf_{q\in\mathbb{C}[z]}\|[1-q(z)z ]p(z) \|_{H^{2}}$ was
known   in \cite{Deg}.

Let  $S$ denote the Hardy shift, i.e.
 \begin{eqnarray*}
(Sf)(z)=zf(z),\ \ f\in H^2.
\end{eqnarray*}
Then we can rewrite (\ref{1.1}) as follows:
\begin{eqnarray*}
M(p)=\text{dist}(p(S)1,[Sp(S)1])&=&\inf_{q\in\mathbb{C}[z]}\|[I-q(S)S]
p(S)1\|.
\end{eqnarray*}
By using the inner-outer factorization of functions in $H^2$, one
will see that for any unit vector $e$ in $H^2$
$$
 \text{dist}(p(S)e,[Sp(S)e]) \leq M(p), \ \  p\,  \in\mathbb{C}[z](\text{ see section 2 for details }).
$$
Hence
$$
M(p)=\sup_{\|e\|=1}\text{dist}(p(S)e,[Sp(S)e]).
$$

Inspired by these observations, we will introduce  and study two operator analogues
of Mahler's measure.

Let $T\in B(H)$.
 For $h\in H$, let $[h]$ denote the closed invariant subspace  of $T$ generated by $h$, that is, $$[h]=\overline{\text{span}}\{h,
 Th,T^2h,\ldots\}.$$
 Then  for each  polynomial  $p$ and $e\in H$, define
$$
M_T^e(p)=
\text{dist}(p(T)e,[Tp(T)e])=\inf_{q\in\mathbb{C}[z]}\|[I-q(T)T]
p(T)e \|,
$$ 
called \emph{ the $T$-Mahler measure of  $p$ on $e$}; and set
$$
M_T(p) =  \sup_{\|e\|=1}M_T^e(p),
$$
called \emph{the $T$-Mahler measure of   }$p$.
We will establish some connection between the $T$-Mahler measure and the classical Mahler measure. This makes it possible to study Lehmer's problem in the context of operator theory.

This paper is organized as follows.

In Section 2, we will pay attention to the properties of the $T$-Mahler measure.
In particular, when $T$ is a contraction,
$M_T(p)\leq M(p)$ for all $p\in \mathbb{C}[z]$. The multiplicativity  properties of $M_T$ and $M_T^e$
are also studied.

In Section 3, in order to generalize Lehmer's problem in the context of operator theory, we will introduce and study an important class of operators, the so-called  subharmonic operators which is closely related to the operator-theoretic Mahler measure.

Section 4 is devoted to applications of our results in previous sections. It is shown that the operator version of Lehmer's problem fails
 under some mild condition.
As an application, one
gives new proofs of some known results in \cite{Pr08a} and \cite{Hu}, see
Example \ref{e4.5} and Remark \ref{r4.7}.

\section{ Operator analogues of Mahler's measure}
In this section,  we present a geometric meaning of Mahler's measure.

Motivated by this, two operator analogues of Mahler's measure  are introduced.
Some connection between the $T$-Mahler measure and the classical Mahler measure are realized.
Finally, we will pay attention to the multiplicativity properties of the $T$-Mahler measure.

Let us recall the classical Szeg\"o theorem \cite[p.49]{Ho}.
Let $\mu$ be a finite positive Borel measure on the unit circle $\mathbb{T}$ and $h$ be the derivative of $\mu$ with
respect to the normalized Lebesgue measure.
That is,
$$\mathrm{d}\mu=h(e^{i\theta })\frac{\mathrm{d}\theta}{2\pi}+\mathrm{d}\mu_s,$$
where $\mathrm{d}\mu_s$ and $\frac{\mathrm{d}\theta}{2\pi}$ are mutually singular and $h\in L^1(\frac{\mathrm{d}\theta}{2\pi})$.
Szeg\"o 's theorem states that 
$$
\inf_{f \in A_0}\int|1-f|^2 \,\mathrm{d}\mu\,=\,\exp\bigg[\frac{1}{2\pi}\int_{-\pi}^{\pi}\log h(e^{i\theta })\,
\mathrm{d}\theta\bigg],
$$
where $A_0$ is the disk algebra defined by
$$\{f: f\in C(\mathbb{T}),\hat{f}(n)\triangleq \int_{0}^{2\pi}f(e^{ i \theta} ) e^{-in\theta} \frac{d\theta}{2\pi}=0, n\leq 0\}.$$
In particular, if $p \in \mathbb{C}[z]$ and
  $\mathrm{d}\mu=|p|^2\frac{\mathrm{d}\theta}{2\pi} $, then by  Szeg\"o 's theorem we have
\begin{eqnarray}
M^2 (p)&=&\exp\bigg[\frac{1}{2\pi}\int_{-\pi}^{\pi}\log |p|^2\,\mathrm{d}\theta\bigg]\nonumber \\
&=&\inf_{f \in A_0}\int_{T}|1-f|^2|p|^2 \,\frac{\mathrm{d}\theta}{2\pi} \nonumber \\
&=&\inf_{q \in \mathbb{C}[z]}\int_{T}|(1-qz)p|^2 \,\frac{\mathrm{d}\theta}{2\pi} \nonumber \\
&=& \inf_{q \in \mathbb{C}[z]}\|(1-qz)p\|^2 \label{e2.1} \\
&=& \text{dist}^2(p,[zp]). \label{e2.2}
 \end{eqnarray}
  As mentioned in the introduction, let  $S$ be the Hardy shift. Then we have the following operator-theoretic form of Mahler's measure
\begin{eqnarray}
M(p)=\text{dist}(p,[z p])&=&\inf_{q\in\mathbb{C}[z]}\|[1-q(z)z ]p(z) \|  \nonumber  \\
&=&\inf_{q\in\mathbb{C}[z]}\|[I-q(S)S] p(S)1\| \nonumber  \\
&=& \,\text{dist}(p(S)1,[Sp(S)1]). \label{e2.3}
\end{eqnarray}
Inspired by this observation, we have the following definition.
\begin{Def}   Let $T\in B(H)$ and $[h]$ denote the closed invariant subspace  of $T$ generated by $h\in H$.
Then for each vector $e\in H$,  define
\begin{eqnarray*}
M_T^e(p)= \text{\upshape dist}(p(T)e,[Tp(T)e])=\inf_{q\in\mathbb{C}[z]}\|[I-q(T)T] p(T)e \|,\ \ p\in\mathbb{C}[z],
\end{eqnarray*} called  the $T$-Mahler measure on $e$.
\end{Def}
For example, let $T=S$. For each unit vector $e\in H^2$  and any polynomial $p$, we have
\begin{eqnarray*}
M_S^{e}(p)&=&\text{dist}(p(S)e,[Sp(S)e])\\
&=&\Bigg(\inf_{f \in \mathbb{C}[z],f(0)=0}\int_{\mathbb{T}}|pe-fpe|^2 \,\frac{\mathrm{d}\theta}{2\pi}\Bigg)^{\frac{1}{2}} \\
&=&\exp\bigg[\frac{1}{2\pi}\int_{-\pi}^{\pi}\log |pe|\,\mathrm{d}\theta\bigg]\\
&=&\exp\bigg[\frac{1}{2\pi}\int_{-\pi}^{\pi}\log |p|\,\mathrm{d}\theta\bigg] 
\exp\bigg[\frac{1}{2\pi}\int_{-\pi}^{\pi}\log |e|\,\mathrm{d}\theta\bigg]\\
&\leq&\exp\bigg[\frac{1}{2\pi}\int_{-\pi}^{\pi}\log |p|\,\mathrm{d}\theta\bigg]\\
&=&M(p).
\end{eqnarray*}
Thus
$$
\sup_{\|e\|=1}M_S^e(p)=M(p).
$$
This leads to the following definition.
\begin{Def} Let $T\in B(H),$  set
\begin{eqnarray*}
M_T (p)=\sup_{\|e\|=1}M_T^e(p),\ \ p\in \mathbb{C}[z].
\end{eqnarray*} Then $M_T$ is  called the $T$-Mahler measure.
\end{Def}
It is easy to verify that $M_T$ is
unitary invariant for $T$. This means that if $T_1$ is unitarily equivalent to $T_2$, then $M_{T_1}=M_{T_2}$.
Also, observe  that if $M_T\neq0$ then $T$ has a nontrivial invariant subspace.

From the discussion before Definition 2.2, one sees that both $M_S^{1}$ and $M_S$ are equal to Mahler's measure.
In the following, we will give more results related to Mahler's measure.

\subsection{Some connection between the $T$-Mahler measure and the classical Mahler measure}

We state our main result in this section as follows, and its proof is placed at the end of Section 2.1.
\begin{thm}
For any contraction $T\in B(H),$ i.e. $\|T\|\leq1$,  we have \label{2.6}
$$
M_{T}(p)\leq M(p) ,\, \  p\in \mathbb{C}[z].
$$
\end{thm}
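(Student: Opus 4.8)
The plan is to bound $M_T(p)$ from above by passing from vector norms to operator norms, then invoking von Neumann's inequality to replace the operator norm of a polynomial in $T$ by a supremum norm on $\mathbb{T}$; what is left is a purely function-theoretic extremal identity for the supremum norm.

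First I would fix $p\in\mathbb{C}[z]$ and, for each $q\in\mathbb{C}[z]$, set $r_q(z)=[1-q(z)z]p(z)$, which is again a polynomial, so that $[I-q(T)T]p(T)=r_q(T)$. For every unit vector $e$ and every $q$,
\[
M_T^e(p)\le\|r_q(T)e\|\le\|r_q(T)\|.
\]
Since the right-hand side is independent of $e$, taking the supremum over unit vectors and then the infimum over $q$ gives $M_T(p)\le\inf_{q}\|r_q(T)\|$. As $\|T\|\le1$, von Neumann's inequality yields $\|r_q(T)\|\le\|r_q\|_\infty:=\sup_{z\in\mathbb{T}}|r_q(z)|$. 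When $q$ ranges over $\mathbb{C}[z]$, the factor $1-q(z)z$ ranges over all polynomials $g$ with $g(0)=1$, so at this stage
\[
M_T(p)\le\inf\{\|gp\|_\infty:\ g\in\mathbb{C}[z],\ g(0)=1\}.
\]

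It then remains to prove the function-theoretic bound $\inf\{\|gp\|_\infty:\ g\in\mathbb{C}[z],\ g(0)=1\}\le M(p)$. Here I would use the inner–outer factorization $p=b\,u$, where $b$ is the finite Blaschke product carrying the zeros of $p$ inside $\mathbb{D}$ and $u=p/b$ is the resulting outer polynomial. A direct computation of the value at the origin gives $|u(0)|=|a_d|\prod_i\max\{1,|\alpha_i|\}=M(p)$. If $p$ has no zeros on $\mathbb{T}$, then $u$ does not vanish on $\overline{\mathbb{D}}$, so $g=u(0)/u$ is analytic on a neighbourhood of $\overline{\mathbb{D}}$ and satisfies $g(0)=1$ and $gp=u(0)\,b$, whence $\|gp\|_\infty=|u(0)|=M(p)$ because $|b|=1$ on $\mathbb{T}$; approximating $g$ uniformly on $\overline{\mathbb{D}}$ by the partial sums of its Taylor series, each of which still has constant term $1$, produces admissible polynomials $g_n$ with $\|g_np\|_\infty\to M(p)$.

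The one delicate point, which I expect to be the main obstacle, is the presence of zeros of $p$ on $\mathbb{T}$, where $1/u$ is unbounded and the function $g=u(0)/u$ is no longer admissible. I would handle this by the dilation device $g_\rho(z)=u(0)/u(\rho z)$ with $\rho\uparrow1$: for $\rho<1$ the polynomial $u(\rho z)$ has no zeros on $\overline{\mathbb{D}}$, $g_\rho(0)=1$, and an elementary estimate of $\sup_{z\in\mathbb{T}}|u(z)/u(\rho z)|$ near each boundary zero (where numerator and denominator have nearby simple zeros) shows this supremum tends to $1$ as $\rho\uparrow1$. Hence $\|g_\rho p\|_\infty\to|u(0)|=M(p)$, and approximating each $g_\rho$ by its Taylor polynomials as above again yields admissible polynomials with supremum norms converging to $M(p)$. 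Combining this with the inequality of the second paragraph gives $M_T(p)\le M(p)$, as desired.
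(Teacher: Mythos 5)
Your argument is correct, but it takes a genuinely different route from the paper. The paper dilates the contraction $T$ to a unitary $U$ (Sz.-Nagy), restricts $U$ to the reducing subspace generated by $e$, represents that restriction as $N_\mu$ on $L^2(\mu)$, and applies Szeg\H{o}'s theorem to $|p|^2\,\mathrm{d}\mu$; this yields the exact formula $M_U^e(p)=M_U^e(1)\cdot M(p)$ of (\ref{e2.4}), from which the theorem follows since $M_U^e(1)\le 1$. You instead interchange $\sup_e$ and $\inf_q$ (legitimate, since $\sup\inf\le\inf\sup$), invoke von Neumann's inequality to reduce to the scalar extremal problem $\inf\{\|gp\|_\infty:\ g\in\mathbb{C}[z],\ g(0)=1\}$, and solve that by an explicit Blaschke/outer factorization together with the radial dilation $g_\rho(z)=u(0)/u(\rho z)$ to handle zeros of $p$ on $\mathbb{T}$. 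Both proofs ultimately rest on dilation (von Neumann's inequality is itself usually proved that way), but yours replaces the measure-theoretic Szeg\H{o} argument by a concrete construction of near-extremal multipliers, and as a by-product gives the identity $\inf\{\|gp\|_\infty:\ g(0)=1\}=M(p)$, the reverse inequality being (\ref{e2.1}) combined with $\|\cdot\|_{H^2}\le\|\cdot\|_\infty$. What your route does not produce is the isometry formula (\ref{e2.4}) itself, which the paper reuses later (e.g.\ in the equality case of Corollary \ref{cor 2.8}). One small point of care: at the boundary zeros you should bound $\sup_{\mathbb{T}}|u(z)/u(\rho z)|$ factor by factor over the linear factors of $u$ (each unimodular zero $\omega$ gives $|z-\omega|/|\rho z-\omega|\le\rho^{-1/2}$ on $\mathbb{T}$, and zeros outside $\overline{\mathbb{D}}$ cause no trouble); your phrase ``nearby simple zeros'' overlooks possible multiplicities, but the factorwise estimate survives them, so this is a matter of wording rather than a gap.
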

First, we establish the following lemma.
\begin{lem}
Suppose $V$ is an isometry on $H$. Then \label{2.3}
$$
M_{V}(p)= M_{V}(1)\cdot M(p) ,\, \ p\in  \mathbb{C}[z].
$$
\end{lem}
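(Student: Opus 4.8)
The key structural fact I would exploit is the Wold decomposition of an isometry $V$: the Hilbert space $H$ splits as $H = H_u \oplus H_s$, where $V|_{H_u}$ is unitary and $V|_{H_s}$ is a unilateral shift of some multiplicity. My goal is to show that the $T$-Mahler measure factors multiplicatively as $M_V(p) = M_V(1)\cdot M(p)$, and the natural strategy is to pass from the abstract isometry to a concrete model where the classical Szeg\"o computation (already carried out for the Hardy shift $S$ in the paragraph after Definition 2.2) can be applied vectorwise.

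\medskip
\noindent\textbf{First steps.} I would begin by analyzing $M_V^e(p)$ for a fixed unit vector $e$. Since $V$ is an isometry, $p(V)$ and $q(V)V$ are norm-preserving in the relevant directions, and the cleanest approach is to reduce to a scalar-valued spectral picture on the cyclic subspace $[e] = \overline{\operatorname{span}}\{e, Ve, V^2e, \ldots\}$. On such a cyclic subspace, $V$ is unitarily equivalent to multiplication by $z$ on some $H^2$-type space or on $L^2(\mu)$ for a measure $\mu$ on $\mathbb{T}$, according to whether $e$ lands in the shift part or the unitary part of the Wold decomposition. This lets me compute
$$
M_V^e(p) = \operatorname{dist}(p(V)e, [Vp(V)e]) = \inf_{q\in\mathbb{C}[z]}\|[I - q(V)V]p(V)e\|,
$$
and I expect that on the cyclic piece this distance again unwinds, via Szeg\"o's theorem, into an exponential of an integral of $\log|p|$ against a measure determined by $e$, exactly as in the model computation $M_S^e(p) = M(p)\exp[\tfrac{1}{2\pi}\int \log|e|\,\mathrm{d}\theta]$. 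The governing identity should be the Szeg\"o/multiplicativity relation $\log M_V^e(p) = \log M(p) + \log M_V^e(1)$ at the level of each cyclic vector $e$ for which $M_V^e(1)\neq 0$, with both sides vanishing together otherwise.

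\medskip
\noindent\textbf{Taking the supremum.} Once I have the pointwise relation $M_V^e(p) = M(p)\cdot M_V^e(1)$ for every unit vector $e$ (the crux being that $M(p)$, the classical Mahler measure, is the \emph{same} multiplicative constant independent of $e$, coming from the outer/Szeg\"o factorization), the conclusion follows by taking the supremum over $\|e\|=1$: since $M(p)\geq 0$ is a fixed scalar,
$$
M_V(p) = \sup_{\|e\|=1} M_V^e(p) = M(p)\cdot \sup_{\|e\|=1} M_V^e(1) = M(p)\cdot M_V(1).
$$

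\medskip
\noindent\textbf{The main obstacle.} The delicate point is establishing the clean factorization $M_V^e(p) = M(p)\cdot M_V^e(1)$ uniformly over all unit vectors $e$, including those supported partly on the unitary part $H_u$, where no Szeg\"o theorem in the $H^2$ sense is available and $M_V^e(1)$ may degenerate. I would handle this by arguing on the cyclic subspace generated by $e$: there $V$ acts as multiplication by $z$ on $L^2(\mu_e)$ for the scalar spectral measure $\mu_e$ of $e$, the quantity $[Vp(V)e]$ corresponds to the $z$-invariant subspace generated by $zp$ in $L^2(|p|^2\,\mathrm{d}\mu_e)$, and the general Szeg\"o theorem quoted at the start of Section~2 (for an arbitrary finite measure $\mu$ with absolutely continuous part $h$) applies directly. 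The Szeg\"o formula gives $M_V^e(p)^2 = \exp[\tfrac{1}{2\pi}\int \log(|p|^2 h_e)\,\mathrm{d}\theta]$ where $h_e\,\tfrac{\mathrm{d}\theta}{2\pi}$ is the absolutely continuous part of $\mu_e$, and since $\log|p|^2$ splits off additively and integrates to $\log M(p)^2$, the $p$-dependence factors out precisely as $M(p)$, with the residual factor being exactly $M_V^e(1)$. Verifying that the singular part of $\mu_e$ contributes nothing to either side (so the factorization is exact, not merely an inequality) is the technical heart, and it is here I would be most careful.
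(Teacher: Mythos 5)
Your proposal is correct and follows essentially the same route as the paper: restrict to the cyclic subspace generated by $e$, model the restricted operator as multiplication by $z$ with respect to a measure $\mu_e$ on $\mathbb{T}$, apply Szeg\"o's theorem to $|p|^2\,\mathrm{d}\mu_e$ so that $\log|p|^2$ splits off additively (the singular part contributing nothing) and yields $M_V^e(p)=M(p)\cdot M_V^e(1)$, then take the supremum over unit vectors. The one point you should make explicit is the provenance of $\mu_e$: a non-unitary isometry has no spectral measure, so the paper first extends $V$ to a unitary $U$ on $H\oplus H$ and applies the spectral theorem to the star-cyclic unitary $U|_{[e]}$ on the reducing subspace generated by $e$, which carries $e$ to $1\in L^2(\mu_e)$; your ``scalar spectral measure of $e$'' should be understood via this extension (and the model space is the closure of the polynomials in $L^2(\mu_e)$, not all of $L^2(\mu_e)$), while the Wold decomposition you invoke at the outset is not actually needed and is slightly misleading for vectors mixing the shift and unitary parts.
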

\begin{proof}
Suppose $V$ is an isometry on $H$. It is well-known that $V$ has a unitary extension $U$, where $U$ can be decomposed as

\[
U=\left[
\begin{array}{ll}
V&I_H-VV^*\\
0&V^*
\end{array}
\right],
\]
with respect to $K \triangleq H\oplus H$, see [Pau, p.6].

For each unit vector $e\in H$, denote by $[e]$ the closed reducing subspace of $U$ generated by $e$,
and by $U_{[e]}$ the restriction of $U$ on $[e]$. Then $U$ can be be decomposed as
\[
U=\left[
\begin{array}{ll}
U_{[e]}&0\\
0&U'
\end{array}
\right],
\]
with respect to $ [e]\oplus [e]^{\perp}$.
It is easy to verify that
$$
M_{V}^{e}(p)=M_{U}^{e}(p)=M_{U_{[e]}}^{e}(p),\ \ p\in \mathbb{C}[z].
$$

 Recall that an operator $T\in B(H)$ is called
\emph{star-cyclic}  if there is a vector $h\in H$ such that
$$H=\text{cl}\{p(T,T^*)h:p \text{ is noncommutative polynomial in two
variables }\}.$$ Observe that $U_{[e]}$ is a
star-cyclic unitary operator.   By the classical theory  of normal operator
[ Co09, p.269; Co00, p.51], there exists a unitary operator
$U_0:[e]\rightarrow L^2(\mu)$ satisfying $$N_{\mu}=U_0U_{[e]}U_0^{-1} \ \  \mathrm{and} \ \ U_0 e=1.$$
Here, $\mu$ is a probability Borel measure on the unit circle $\mathbb{T}$, and  $N_{\mu}$ is the multiplication operator on $L^2(\mu)$ defined by
\begin{eqnarray*}
(N_{\mu}f)(z)=zf(z),\ \ f\in L^2(\mu).
\end{eqnarray*}

Observe that if there is a unitary operator $U_1$ such that
$T_1=U_1T_2 U^{*}_1$ and $U_1e_2=e_1$, then $M_{T_1}^{e_1}=M_{T_2}^{e_2}$. Then  for each $p\in \mathbb{C}[z]$,  we have
\begin{eqnarray*}
M_{U_{[e]}}^{e}(p)&=&M_{N_{\mu}}^{1}(p)\\
&=&\bigg(\inf_{f \in A_0}\int|p-fp|^2 \,\mathrm{d}\mu\bigg)^{\frac{1}{2}}\\
&=&\bigg(\inf_{f \in A_0}\int|1-f|^2 \,|p|^2\mathrm{d}\mu\bigg)^{\frac{1}{2}}\,\,\,\,\,\,\,
\big(\mathrm{d}\mu=h(e^{i\theta})\frac{\mathrm{d}\theta}{2\pi}+\mathrm{d}\mu_s \big)\\
&=&\bigg(\exp\bigg[\frac{1}{2\pi}\int_{-\pi}^{\pi}\log |p(\theta)|^2|h(e^{i\theta})|\,\mathrm{d}\theta\bigg]\bigg)^{\frac{1}{2}} \,\,\,\,\,(\text{ by Szeg\"o 's theorem})\\
&=&\exp\bigg[\frac{1}{4\pi}\int_{-\pi}^{\pi}\log |h(\theta)|\,\mathrm{d}\theta\bigg]\cdot M(p),
\end{eqnarray*}
where
$\mathrm{d}\mu=h(e^{i\theta})\frac{\mathrm{d}\theta}{2\pi}+\mathrm{d}\mu_s$
is the Lebesgue decomposition  of $\mathrm{d}\mu$ relative to
$\frac{\mathrm{d}\theta}{2\pi}$.  
Thus
$$
M_{V}^{e}(p)=\exp\bigg[\frac{1}{4\pi}\int_{-\pi}^{\pi}\log |h(\theta)|\,\mathrm{d}\theta\bigg]\cdot M(p),\ \ p\in \mathbb{C}[z].
$$
In particular, we have
$$
M_{V}^{e}(1)=\exp\bigg[\frac{1}{4\pi}\int_{-\pi}^{\pi}\log |h(\theta)|\,\mathrm{d}\theta\bigg],
$$
which implies that
\begin{equation}
 M_{V}^{e}(p)=M_{V}^{e}(1)\cdot M(p),\ \ p\in \mathbb{C}[z].  \label{e2.4}
\end{equation}
 Therefore, for each $p\in \mathbb{C}[z]$  we have
$$
M_{V}(p)=\sup_{\|e\|=1}M_{V}^{e}(p)=\bigg (\sup_{\|e\|=1}M_{V}^{e}(1)\bigg )\cdot M(p)=M_V(1)\cdot M(p).
$$
This completes the proof.
\end{proof}
The following lemma is of independent interest.
\begin{lem}
If $V$ is an isometry  on $H$, then \label{2.4}
$$M_V(1)=0 \text{ or }1.$$
\end{lem}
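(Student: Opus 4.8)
The plan is to read everything off the formula established inside the proof of Lemma \ref{2.3}: for every unit vector $e\in H$ one has $M_V^e(1)=\exp[\frac{1}{4\pi}\int_{-\pi}^{\pi}\log h_e(\theta)\,\mathrm{d}\theta]$, where $h_e\ge0$ is the density of the absolutely continuous part (with respect to $\frac{\mathrm{d}\theta}{2\pi}$) of the scalar spectral measure $\mu_e$ attached to $e$ by the unitary extension used there, and $\mu_e$ is a probability measure. First I would record the uniform upper bound $M_V(1)\le1$: since $\log$ is concave, Jensen's inequality against the probability measure $\frac{\mathrm{d}\theta}{2\pi}$ gives $\frac{1}{2\pi}\int\log h_e\le\log\big(\frac{1}{2\pi}\int h_e\big)=\log\mu_e^{ac}(\mathbb{T})\le\log\mu_e(\mathbb{T})=0$, so $M_V^e(1)\le1$ for every $e$. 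It then remains to prove the dichotomy: if $M_V(1)>0$, then $M_V(1)=1$, i.e. to produce a unit vector on which the density is exactly $1$.

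To do this I would split $V$ through its Wold decomposition $H=H_s\oplus H_u$, with $V|_{H_s}$ a unilateral shift and $V|_{H_u}$ a unitary $W$. If the shift part is nontrivial, pick a unit wandering vector $w\in\ker V^*$; then $\langle V^nw,w\rangle=\langle V^{n-1}w,V^*w\rangle=0$ for all $n\ge1$, so $w\perp[Vw]$ and hence $M_V^w(1)=\mathrm{dist}(w,[Vw])=\|w\|=1$. Together with the upper bound this already forces $M_V(1)=1$ whenever $H_s\ne\{0\}$.

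It then remains to treat the purely unitary case $V=W$, where the unitary extension is superfluous and every star-cyclic reducing subspace $[e]\subseteq H$ is modelled concretely as $(L^2(\mu_e),M_z)$ with $e\leftrightarrow1$. If $M_W(1)=0$ there is nothing to prove, so suppose some unit $e_0$ has $M_W^{e_0}(1)>0$; then $\int\log h_0\,\mathrm{d}\theta>-\infty$, which forces $h_0>0$ a.e. on $\mathbb{T}$, where $\mu_{e_0}=h_0\frac{\mathrm{d}\theta}{2\pi}+\mu_{0,s}$. I would then \emph{flatten} the density: choosing a Lebesgue-null Borel set $N$ carrying $\mu_{0,s}$ and setting $\phi=h_0^{-1/2}\mathbf{1}_{\mathbb{T}\setminus N}\in L^2(\mu_{e_0})$, one checks $\|\phi\|^2=\int_{\mathbb{T}\setminus N}h_0^{-1}\,\mathrm{d}\mu_{e_0}=\int_{\mathbb{T}\setminus N}\frac{\mathrm{d}\theta}{2\pi}=1$ and $|\phi|^2\,\mathrm{d}\mu_{e_0}=\frac{\mathrm{d}\theta}{2\pi}$. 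Hence the unit vector $e=\phi$ has density $h_e\equiv1$, so $M_W^e(1)=1$ and $M_W(1)=1$.

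The routine parts (the Jensen bound, the wandering-vector orthogonality, and the Szeg\"o identification of $M_W^e(1)$ with the geometric mean) I expect to cause no trouble. The delicate point — and the reason for passing through the Wold decomposition rather than arguing uniformly — is the flattening step: the flattened vector $\phi$ must remain in $H$. On the unitary summand this is automatic, since $[e_0]$ is a genuine reducing subspace of $W$ inside $H$; but on the shift summand the analogous flattening would in general leave $H$ (for a generic outer $g$ one has $1/g\notin L^2$), which is exactly why the shift part is instead disposed of by the wandering vector. I would also check carefully that $h_0>0$ a.e. is genuinely forced by $M_W^{e_0}(1)>0$, and that $\phi$ annihilates the singular part so that no singular mass survives in $|\phi|^2\,\mathrm{d}\mu_{e_0}$.
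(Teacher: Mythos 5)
Your proposal is correct and follows essentially the same route as the paper: Wold decomposition, a wandering vector disposing of the shift part, and in the unitary case the identification $M_V^e(1)=\exp\bigl[\frac{1}{4\pi}\int\log h_e\,\mathrm{d}\theta\bigr]$ followed by the same flattening vector $h_0^{-1/2}\mathbf{1}_{\mathbb{T}\setminus N}$ to force the value $1$. The only cosmetic difference is that you obtain the upper bound $M_V(1)\le 1$ via Jensen's inequality where the trivial estimate $\mathrm{dist}(e,[Ve])\le\|e\|$ already suffices.
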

\begin{proof}
Since $V$ is an isometry. By the von Neumann-Wold Decomposition theorem \cite[p.112, Theorem 23.7]{Co00}
$$
V=S'\oplus U,
$$
where $S'$ is a unilateral shift and $U$ is a unitary operator.

If $S'\neq0$, then for any unit vector $e\in H\ominus VH$,
$$
M_V^{e}(1)=M_S^{e}(1)=1,
$$
and hence $M_V(1)=1$.

If $V$ is a unitary operator. Then there are two cases under consideration:
\textbf{I}. $M_V^{e}(1)=0 $ for all unit vectors $e\in H$; \textbf{II}. There is
a unit vector $e$ such that   $M_V^{e}(1)\neq 0 .$

 \textbf{Case I.} $M_V^{e}(1)=0 $ for all unit vectors $e\in H$. In this case, $M_V(1)=0$.

 \textbf{Case II.} There is a unit vector $e$ such that   $M_V^{e}(1)\neq 0 .$
By the same reasoning as in the proof of Lemma \ref{2.3}, there is a probability Borel measure $\mu$ on  $\mathbb{T}$
such that there exists a unitary operator $U_0:[e]\rightarrow L^2(\mu)$ satisfying $$N_{\mu}=U_0V_{[e]}U_0^{-1}\ \ \mathrm{and} \ \ U_0 e=1.$$ Then
\begin{eqnarray*}
M_V^{e}(1)&=& M_{V_{[e]}}^{e}(1)=M_{N_{\mu}}^{1}(1)\\
&=&\bigg(\inf_{f \in A_0}\int|1-f|^2 \,\mathrm{d}\mu\bigg)^{\frac{1}{2}}\,\,\,\,\,
\big(\mathrm{d}\mu=h\frac{\mathrm{d}\theta}{2\pi}+\mathrm{d}\mu_s\big)\\
&=&\exp\bigg[\frac{1}{2}\int_{\mathbb{T}}\log |h|\,\frac{\mathrm{d}\theta}{2\pi} \bigg]\neq 0.
\end{eqnarray*}
This shows that $\log |h|\in L^1(\frac{\mathrm{d}\theta}{2\pi})$.

Since $M_T$ is unitarily invariant for $T$, we have
\begin{eqnarray*}
M_{V_{[e]}}(1)=M_{N_{\mu}}(1)&=&\sup_{ ||f||_{L^2(\mathrm{d}\,\mu)}=1}M_{N_{\mu}}^{f}(1)   \\
&=&\sup_{  ||f||_{L^2(\mathrm{d}\,\mu)}=1}\bigg(\inf_{p \in A_0}\int|1-p|^2 \cdot |f|^2\,\mathrm{d}\mu\bigg)^{\frac{1}{2}}  \\
&=&\sup_{ ||f||_{L^2(\mathrm{d}\,\mu)}=1}\exp\bigg[\frac{1}{4\pi}\int_{-\pi}^{\pi}\log |h(\theta)||f(\theta)|^2\,\mathrm{d}\theta\bigg].
\end{eqnarray*}
That is, \begin{equation}
M_{V_{[e]}}(1)=M_{N_{\mu}}(1)=
\sup_{ ||f||_{L^2(\mathrm{d}\,\mu)}=1}\exp\bigg[\frac{1}{4\pi}\int_{-\pi}^{\pi}\log |h(\theta)||f(\theta)|^2\,\mathrm{d}\theta\bigg].  \label{ad2}
\end{equation}
Put $$E=\mathrm{supp } \,\mu_s \cup h^{-1}\{0,+\infty\},$$  and then $E$   
has Lebesgue measure zero.
Set
\[
f(x)=\left\{
\begin{array}{ll}
0& x\in E,\\
\frac{1}{\sqrt{h(x)}}& x\not\in E.
\end{array}
\right.
\]
Clearly,
$$
\int_{\mathbb{T}}|f|^2\,\mathrm{d}\,\mu=1.
$$ It is easy to see that
$$
M_{N_{\mu}}^{f}(1)=\exp\bigg[\frac{1}{4\pi}\int_{-\pi}^{\pi}\log |h(\theta)||f(\theta)|^2\,\mathrm{d}\theta\bigg]=1.
$$
Then
$$
1=M_{N_{\mu}}(1)=M_{V_{[e]}}(1)\leq M_V(1)\leq1,
$$
forcing $M_V(1)=1.$
The proof is complete.
\end{proof}
  Combining Lemma \ref{2.3} with Lemma \ref{2.4} yields the following.
\begin{prop}
Suppose $V$ is an isometry on $H$. Then \label{2.5}
$$
M_{V}\equiv 0 \text{ or }M_{V}(p)=M(p),\ \ p\in  \mathbb{C}[z].
$$
In particular, if $V$ is a   non-unitary isometry, then $$M_{V}(p)=M(p),\ \ p\in  \mathbb{C}[z].$$
\end{prop}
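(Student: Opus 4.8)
The plan is to obtain the proposition directly by combining the two preceding lemmas, which together already pin down the only two possible behaviors of $M_V$. First I would invoke Lemma~\ref{2.3}: for any isometry $V$ the $V$-Mahler measure factors as $M_V(p)=M_V(1)\cdot M(p)$ for every $p\in\mathbb{C}[z]$. This reduces the entire question to understanding the single scalar $M_V(1)$, since once that constant is known the value of $M_V$ on all polynomials is completely determined.

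Next I would apply Lemma~\ref{2.4}, which asserts that this scalar can only equal $0$ or $1$. Splitting into these two cases then finishes the general dichotomy. If $M_V(1)=0$, the factorization forces $M_V(p)=0$ for all $p$, that is, $M_V\equiv 0$; while if $M_V(1)=1$, the factorization gives $M_V(p)=M(p)$ for all $p$. No intermediate behavior can occur.

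For the ``in particular'' clause I would return to the von Neumann-Wold decomposition $V=S'\oplus U$ employed in the proof of Lemma~\ref{2.4}. When $V$ is non-unitary, its unilateral shift summand $S'$ is nonzero, so the wandering subspace $H\ominus VH$ contains a unit vector $e$. The computation in that proof shows $M_V^e(1)=M_S^e(1)=1$, whence $M_V(1)=1$, and the factorization then yields $M_V(p)=M(p)$ for every $p$.

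The main, and essentially only, obstacle is that all the substantive analytic work has already been carried out in Lemmas~\ref{2.3} and~\ref{2.4}, so the proposition itself is a formal consequence of them. The single point needing a little care is the last clause, where one must argue that non-unitarity of $V$ forces a nonzero shift part in the Wold decomposition, thereby placing us in the case $M_V(1)=1$ rather than in the trivial case $M_V\equiv 0$.
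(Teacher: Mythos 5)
Your proposal is correct and matches the paper exactly: the proposition is obtained by combining Lemma~\ref{2.3} (the factorization $M_V(p)=M_V(1)\cdot M(p)$) with Lemma~\ref{2.4} ($M_V(1)\in\{0,1\}$), and your handling of the non-unitary case via the nonzero shift summand in the Wold decomposition is precisely the argument already contained in the proof of Lemma~\ref{2.4}.
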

Now we are ready to give the proof of Theorem \ref{2.6}.

\noindent \textbf{Proof of Theorem \ref{2.6}.} From the classical Sz.-Nagy's dilation theorem \cite[p.7, Theorem 1.1]{Pa}, $T$ has a unitary dilation.
That is, there is a Hilbert space $K$ with $H\subseteq K$ and a unitary operator $U \in B(K)$ such that
$$P_{H}U^n|_H=T^n,\ \ n\geq 1,$$
where $P_H$ is the projection from $K$ to $H$. Since for each $p\in \mathbb{C}[z]$,
$$p(T)e=P_{H}p(U)e,$$ we have $\|p(T)e\|\leq \|p(U)e\|$.
 This, combined with  (\ref{e2.4}), shows that for  any unit vector $e\in  H$,
$$
M_{T}^{e}(p)\leq M_{U}^{e}(p)=M_{U}^{e}(1)\cdot M(p)  ,\, \   p\in \mathbb{C}[z].
$$
Since $M_{U}^{e}(1) \leq \|e\|=1,$ it follows that
\begin{equation}
M_{T}^{e}(p)\leq  M(p),\ \ p\in \mathbb{C}[z]. \label{e2.5}
\end{equation}
Thus
\begin{eqnarray*}
M_{T}(p)=\sup_{\|e\|=1}M_{T}^e(p)\leq M(p),\ \ p\in \mathbb{C}[z].
\end{eqnarray*}
The proof is complete.
$\hfill \square$
\vskip2mm

\begin{rem}
One may compare the inequality  $$
M_{T}(p)\leq M(p)  ,\, \   p\in \mathbb{C}[z]
$$with the well-known von Neumann's inequality \cite[p.7]{Pa}:
$$
\|p(T)\|\leq \|p\|_{\infty} \triangleq \sup\{|p(z)|:|z|=1\}  ,\, \  p\in \mathbb{C}[z],
$$  where $T$ is a contraction in both cases.
\end{rem}
It is not difficult to verify the followings:
$$
M_{T}^{ce}(p)= |c|M_{T}^{e}(p)\text{, c}\in\mathbb{C};
$$
\begin{equation}
  M_{cT}^{e}(p)= M_{T}^{e}(p(cz))\text{,
c}\in\mathbb{C}\setminus\{0\}. \label{e2.6}
\end{equation}
 Combining  (\ref{e2.5}) with (\ref{e2.6}) shows that  for any $T\in B(H)$ and $e\in H$,
\begin{equation}
       M_{T}^{e}(p)    \leq  \|e\|\cdot M\big(p(\|T\|z)\big).\label{e2.7}
\end{equation}
\begin{cor}
Let $T\in B(H)$  be a contraction. Then for any unit vector $e\in H$,\label{cor 2.8}
$$
M_{T}^{e}(p)\leq  M(p), \ \ p\in \mathbb{C}[z].
$$  Moreover, the equality  holds for all $p\in \mathbb{C}[z]$ if and only if
$\{T^n e\}_{n=0}^{\infty}$ is an orthonormal sequence.
\end{cor}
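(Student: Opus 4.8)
The plan is to note first that the inequality $M_T^e(p)\le M(p)$ is not new: it is exactly inequality (\ref{e2.5}), which was proved for every unit vector $e$ inside the proof of Theorem \ref{2.6}. So only the equality characterization needs an argument, and I would prove the two implications separately.

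For the sufficiency direction, assume $\{T^ne\}_{n=0}^{\infty}$ is orthonormal. I would first observe that $M_T^e(p)$ is intrinsic to the invariant subspace $[e]=\overline{\text{span}}\{e,Te,T^2e,\dots\}$: both $p(T)e$ and every vector generating $[Tp(T)e]$ lie in $[e]$, so the infimum defining the distance is unchanged upon replacing $T$ by $T|_{[e]}$, giving $M_T^e(p)=M_{T|_{[e]}}^e(p)$. Orthonormality then lets me define $Wz^n=T^ne$, which extends to a unitary operator from $H^2$ onto $[e]$ satisfying $WS=T|_{[e]}W$ and $W1=e$; thus $T|_{[e]}=WSW^*$ with $W1=e$. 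Invoking the unitary-invariance property recorded in the proof of Lemma \ref{2.3} (if $T_1=U_1T_2U_1^*$ and $U_1e_2=e_1$ then $M_{T_1}^{e_1}=M_{T_2}^{e_2}$) gives $M_{T|_{[e]}}^e(p)=M_S^1(p)$, and the computation carried out before Definition 2.2 shows $M_S^1(p)=M(p)$. Hence $M_T^e(p)=M(p)$ for all $p$.

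For the necessity direction, the key point is that I need only test the monomials $p=z^k$. Since $M(z^k)=1$, while
\begin{equation*}
M_T^e(z^k)=\text{dist}\bigl(T^ke,\overline{\text{span}}\{T^{k+j}e:j\ge1\}\bigr)\le\|T^ke\|\le 1,
\end{equation*}
the last inequality holding because $T$ is a contraction and $\|e\|=1$, the hypothesis $M_T^e(z^k)=1$ forces both inequalities to be equalities. Equality in $\|T^ke\|\le1$ gives $\|T^ke\|=1$, and equality of the distance with $\|T^ke\|$ forces $T^ke$ to be orthogonal to $\overline{\text{span}}\{T^{k+j}e:j\ge1\}$, i.e. $\langle T^ke,T^{k+j}e\rangle=0$ for every $j\ge1$. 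Letting $k$ run over all nonnegative integers yields $\|T^ke\|=1$ for all $k$ together with $\langle T^me,T^ne\rangle=0$ whenever $m\ne n$, which is exactly orthonormality of $\{T^ne\}_{n=0}^{\infty}$.

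I expect the main obstacle to be in the sufficiency direction: one must justify carefully that the measure localizes to the cyclic subspace $[e]$ and that $T|_{[e]}$ is a faithful copy of the Hardy shift, so that its $T$-Mahler measure at $e$ coincides with $M$. The necessity direction is comparatively short once one realizes that saturating the bound $M_T^e(z^k)\le M(z^k)=1$ for each monomial already pins down the full Gram matrix $\bigl(\langle T^me,T^ne\rangle\bigr)$.
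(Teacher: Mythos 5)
Your proof is correct and follows essentially the same route as the paper's: the inequality is exactly (\ref{e2.5}), the necessity direction tests the monomials $z^n$ and saturates $\mathrm{dist}(T^ne,\overline{\text{span}}\{T^{n+j}e:j\ge1\})\le\|T^ne\|\le1$ precisely as the paper does, and the sufficiency direction restricts to the cyclic subspace $[e]$. The only (harmless) divergence is in the final step of sufficiency: the paper observes that $T|_{[e]}$ is an isometry with $M_{T|_{[e]}}^{e}(1)=1$ and invokes formula (\ref{e2.4}) from Lemma \ref{2.3}, whereas you identify $T|_{[e]}$ explicitly as a unitary copy of the Hardy shift carrying $e$ to $1$ and quote $M_S^1=M$ --- a slightly more concrete but equivalent reduction, since here $e$ is a wandering vector so the Wold decomposition of $T|_{[e]}$ is a pure shift.
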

\begin{proof}
The inequality follows from (\ref{e2.5}).

Suppose that $M_{T}^{e}(p)=  M(p) $   holds for all $p\in \mathbb{C}[z]$. We will show that
$\{T^n e\}_{n=0}^{\infty}$ is an orthonormal sequence. For this, notice that
$$
M_{T}^{e}(z^n)=M(z^n)=1=\text{dist}\big(T^n e,\overline{\text{\upshape span}}\{T^{n+1}e,T^{n+2}e,\cdots \}\big),\ n\geq 0.
$$
Since
$$\|T^n e\|\leq 1\text{  and  } \text{dist}\big(T^n e,\overline{\text{\upshape span}}\{T^{n+1}e,T^{n+2}e,\cdots \}\big)=1,$$
we have
$$ \|T^n e\| =  1 \ \ \mathrm{and} \ \
  T^n e  \perp  \overline{\text{\upshape span}}\{T^{n+1}e,T^{n+2}e,\cdots \}.$$

On the other hand, if $\{T^n e\}_{n=0}^{\infty}$ is an orthonormal sequence, and write
  $$[e]= \overline{\text{\upshape span}}\{e,Te,\cdots,T^n e,\cdots \}.$$
 Then the restriction  $T|_{[e]}$ of $T$ on $[e]$ is an isometric operator, and $$M_{T\mid_{ [e]}}^{e}(1)=\text{dist}(e,[Te])=1.$$
  Then by (\ref{e2.4}), we have
$$
M_{T}^{e}(p)=M_{T\mid_{[e]}}^{e}(p)=M_{T\mid _{[e]}}^{e}(1)\cdot M(p)=M(p), \ \ p\in \mathbb{C}[z].
$$
This completes the proof.
\end{proof}

\subsection{Multiplicativity}

Let  $\Phi$ be a map from $\mathbb{C}[z]$ to $\mathbb{R}$. If $\Phi(pq)=\Phi(p)\Phi(q)$ for all
$p,q\in\mathbb{C}[z]$,  then $\Phi$ is called \emph{multiplicative}. Clearly, Mahler's measure is multiplicative.
The remaining part of this section  focuses on the multiplicativity properties of  the $T$-Mahler measure.

Recall that the  Bergman space $L_a^2 (\mathbb{D})$ is defined by 
$$
L_a^2 (\mathbb{D})=\{f:f\text{ is holomorphic on } \mathbb{D}\text{ such that }\linebreak
\int_{\mathbb{D}}|f(z)|^{2}\mathrm{d}A(z)<+\infty\},
$$
where $\mathrm{d}A(z)=\frac{\mathrm{d}x\mathrm{d}y}{\pi}
$ is the normalized area measure on $\mathbb{D}$. Denote by $B$ the Bergman shift, defined by
\begin{eqnarray*}
(Bf)(z)=zf(z),\ \ f\in L_a^2 (\mathbb{D}).
\end{eqnarray*}
Then we have $ M_{B}^{1}(z^n)=\|z^n\|=\frac{1}{\sqrt{n+1}}$, which implies that  $M_{B}^{1}$ is not multiplicative.
In general, we have the following lemma.
\begin{lem}Let  $\{e_n\}_{n\geq1}$ be an orthonormal basis of $H$ and
suppose that $T\in B(H)$ is a weighted shift such that\label{lem 2.9}
$$
Te_n=a_n e_{n+1},\ a_n\neq 0 \text{, } n\geq 1.
$$
  Then $M_T^{e_1}$ is multiplicative if and only if $|a_n|=|a_1|$ for all $n\geq1$, and in this case $M_T^{e_1}(p)=M(p(|a_1| z))$.
\end{lem}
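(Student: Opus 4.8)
The plan is to reduce the whole statement to one explicit computation of $M_T^{e_1}$ on monomials, exploiting the orthogonality that is built into any weighted shift. First I would record the action of $T$ on the cyclic vector: iterating $Te_n=a_ne_{n+1}$ gives $T^ne_1=(a_1a_2\cdots a_n)\,e_{n+1}$, so $\{T^ne_1\}_{n\geq0}$ is an orthogonal sequence with $\|T^ne_1\|=|a_1a_2\cdots a_n|$. For $p=z^n$ this makes the measure transparent: $p(T)e_1=(a_1\cdots a_n)e_{n+1}$, while $[\,Tp(T)e_1\,]=[\,T^{n+1}e_1\,]=\overline{\mathrm{span}}\{e_{n+2},e_{n+3},\ldots\}$. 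Since $e_{n+1}$ is orthogonal to that subspace, the distance is just the norm of the vector, so
$$
M_T^{e_1}(z^n)=\|T^ne_1\|=|a_1a_2\cdots a_n|,\qquad n\geq 0 .
$$
In particular $M_T^{e_1}(z)=|a_1|$ and $M_T^{e_1}(1)=1$.

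For the \emph{only if} direction, assume $M_T^{e_1}$ is multiplicative. Applying multiplicativity to the factorization $z^n=z^{n-1}\cdot z$ and inserting the formula above yields the recursion $|a_1\cdots a_n|=|a_1\cdots a_{n-1}|\cdot|a_1|$, which forces $|a_n|=|a_1|$. An easy induction then gives $|a_n|=|a_1|$ for every $n\geq1$, as required.

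For the converse together with the explicit formula, suppose $|a_n|=|a_1|=:r$ for all $n$. Here I would invoke the standard fact that two unilateral weighted shifts whose weights have the same moduli are unitarily equivalent via a diagonal unitary that fixes (up to a unimodular scalar) the first basis vector. This produces a unitary $W\colon H\to H^2$ with $WTW^*=rS$ and $We_1=1$, where $S$ is the Hardy shift. By the unitary invariance of $M^e$ and the scaling identity (\ref{e2.6}), and using $M_S^1=M$ established before Definition 2.2,
$$
M_T^{e_1}(p)=M_{rS}^{1}(p)=M_S^{1}\bigl(p(rz)\bigr)=M\bigl(p(rz)\bigr)=M\bigl(p(|a_1|z)\bigr).
$$
Since $M$ is multiplicative, $p\mapsto M\bigl(p(rz)\bigr)$ is multiplicative, so this both verifies the \emph{if} direction and proves the claimed formula.

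The computations are routine once the monomial values are in hand; the only nontrivial structural input is the unitary classification of weighted shifts by the moduli of their weights, which furnishes the reduction to the Hardy shift in the equal-modulus case. I therefore expect no genuine obstacle beyond correctly arranging the unitary so that it carries $e_1$ to the constant function $1$, so that the scaling identity (\ref{e2.6}) and the base equality $M_S^1=M$ can be applied verbatim.
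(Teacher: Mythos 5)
Your proof is correct and follows essentially the same route as the paper: compute $M_T^{e_1}(z^n)=|a_1\cdots a_n|$ from orthogonality to get the necessity of $|a_n|=|a_1|$, and in the equal-modulus case reduce to the Hardy shift to obtain $M_T^{e_1}(p)=M(p(|a_1|z))$. The only (cosmetic) difference is in how the reduction is packaged: the paper factors $T=|a_1|US$ with $U$ diagonal unitary and verifies the norm identity $\|p(cUS)e_1\|=\|p(cS)e_1\|$ directly inside the infimum, whereas you conjugate by a diagonal unitary fixing $e_1$ to get exact unitary equivalence with $|a_1|S$ and then invoke unitary invariance of $M^e$ together with the scaling identity (\ref{e2.6}); both rest on the same underlying fact.
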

\begin{proof}

First assume that $M_T^{e_1}$ is multiplicative. Then it is easy to see that $M_T^{e_1}(z^n)=\prod_{i=1}^{n}|a_i|$. Thus
$$
\prod_{i=1}^{n}|a_i|=|a_1|^n\ \text{, }\ n\in \mathbb{N}.
$$
Therefore $|a_n|=|a_1|$, $n\geq 1$.

Now assume that $|a_n|=|a_1|$, $n\geq 1$.  Then we can write $$T=|a_1|US,$$ where $U$ is a  unitary operator satisfying
 $U e_n=e^{i\theta_n}e_n,\theta_n\in \mathbb{R}$, $n\geq1$ and $S$ is the Hardy shift.

Observe that $$\|p(c US)e_1\|=\|p(c S)e_1\|$$ holds for all $c\in  \mathbb{R}$ and $p\in \mathbb{C}[z]$.
In fact, this identity is trivial if $p$ is monomial; and in  general, it follows from the   orthogonality of $\{e_n: n\geq 1\}$.

 Therefore,  for each $p\in \mathbb{C}[z]$ we have
\begin{eqnarray*}
M_T^{e_1}(p)&=&\inf\{\|[I-q(T)T]p(T)e_1\|:q\in \mathbb{C}[z]\}\\
&=&\inf\{\|[I-q(|a_1|S)|a_1|S]p(|a_1|S)e_1\|:q\in \mathbb{C}[z]\}\\
&=&\inf\{\|[I-q(S)S]p(|a_1|S)e_1\|:q\in \mathbb{C}[z]\}(\text{ since } |a_1|\neq 0)\\
&=&M_S^{e_1}(p(|a_1|z))\\
&=&M(p(|a_1|z)).
\end{eqnarray*}
 The proof is complete.
\end{proof}

If one replaces  $M_T^{e_1}$  with $M_T$ in Lemma \ref{lem 2.9}, then we get a similar result.
\begin{prop}
With the same assumption as in Lemma 2.9 and assume that  $\{|a_n|\}_{n=1}^{\infty}$
is a decreasing sequence. Then $M_T$ is multiplicative if and only if
$|a_1|=|a_n|$, $n\geq1$. In this case,  $M_T(p)=M(p(|a_1| z))$, $p\in\mathbb{C}[z]$.
\end{prop}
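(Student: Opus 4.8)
The plan is to treat the two implications separately. The forward ("if") direction follows quickly from the isometry results already in hand, while the converse rests on an exact evaluation of $M_T(z^n)$ that the decreasing hypothesis makes available.

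For the easy direction, suppose $|a_n|=|a_1|$ for all $n$. Exactly as in the proof of Lemma \ref{lem 2.9}, I would write $T=|a_1|V$ with $V=US$, where $U$ is the diagonal unitary $Ue_n=e^{i\theta_n}e_n$ and $S$ is the Hardy shift. Then $V^*V=S^*U^*US=S^*S=I$, while the range of $V$ is $\overline{\text{span}}\{e_2,e_3,\dots\}\neq H$, so $V$ is a \emph{non-unitary} isometry. Proposition \ref{2.5} then gives $M_V(p)=M(p)$ for every $p$, and taking the supremum over unit vectors in the scaling identity (\ref{e2.6}) yields $M_T(p)=M_{|a_1|V}(p)=M_V(p(|a_1|z))=M(p(|a_1|z))$. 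Multiplicativity of $M_T$ is immediate from that of the classical Mahler measure, since $M_T(pq)=M(p(|a_1|z)\,q(|a_1|z))=M(p(|a_1|z))\,M(q(|a_1|z))=M_T(p)M_T(q)$; this also establishes the displayed formula.

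For the converse the crucial point is to compute $M_T(z^n)$ for every $n$ using only that $\{|a_n|\}$ is decreasing. Writing a unit vector as $e=\sum_k c_k e_k$ with $\sum_k|c_k|^2=1$, one has $T^n e=\sum_k c_k\,w_k\,e_{k+n}$ with $w_k=a_k a_{k+1}\cdots a_{k+n-1}$, so $\|T^n e\|^2=\sum_k|c_k|^2|w_k|^2$. Because $0\in[T^{n+1}e]$, the trivial estimate $M_T^e(z^n)=\text{dist}(T^n e,[T^{n+1}e])\le\|T^n e\|$ holds; and since $|w_k|=\prod_{j=0}^{n-1}|a_{k+j}|$ is maximised at $k=1$ when the weights decrease, I get $\|T^n e\|\le\prod_{i=1}^n|a_i|$ for every unit $e$. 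Conversely $e=e_1$ gives $M_T^{e_1}(z^n)=\|T^n e_1\|=\prod_{i=1}^n|a_i|$, since $T^n e_1$ is a scalar multiple of $e_{n+1}$, which is orthogonal to $[T^{n+1}e_1]$. Hence $M_T(z^n)=\prod_{i=1}^n|a_i|$, and in particular $M_T(z)=|a_1|$. Note that the standard inequality (\ref{e2.7}) is not sharp enough here, as it only yields $M_T(z^n)\le\|T\|^n=|a_1|^n$; the sharper per-index bound via $\|T^n e\|$ is what does the work.

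Finally, assuming $M_T$ multiplicative, I would combine $M_T(z^n)=M_T(z)^n=|a_1|^n$ with the identity just proved to obtain $\prod_{i=1}^n|a_i|=|a_1|^n$ for all $n$; dividing the $n$-th relation by the $(n-1)$-th (and using $a_i\neq0$) forces $|a_n|=|a_1|$, completing the argument. The only substantial step is the evaluation $M_T(z^n)=\prod_{i=1}^n|a_i|$, and the place to be careful is the identification $\sup_k\prod_{j=0}^{n-1}|a_{k+j}|=\prod_{i=1}^n|a_i|$: this is precisely where the decreasing hypothesis is needed, and without it $M_T(z^n)$ need not equal $\prod_{i=1}^n|a_i|$, so the equivalence could fail.
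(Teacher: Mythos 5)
Your argument is correct, and the converse direction is essentially identical to the paper's: you evaluate $M_T(z^n)=\prod_{i=1}^n|a_i|$ by pairing the exact value at $e_1$ (where $T^ne_1$ is a multiple of $e_{n+1}\perp[T^{n+1}e_1]$) with the upper bound $\|T^ne\|\le\sup_{m\ge1}\prod_{i=m}^{m+n-1}|a_i|=\prod_{i=1}^{n}|a_i|$, which is exactly where the decreasing hypothesis is used in the paper as well; multiplicativity then forces $\prod_{i=1}^n|a_i|=|a_1|^n$ and induction finishes. Your remark that (\ref{e2.7}) alone would be too weak here is accurate and worth having made explicit. The forward direction is where you take a genuinely (if mildly) different route: the paper squeezes $M_T(p)$ between the upper bound $M(p(|a_1|z))$ from (\ref{e2.7}) and the lower bound $M_T^{e_1}(p)=M(p(|a_1|z))$ supplied by Lemma \ref{lem 2.9}, whereas you recognize $V=T/|a_1|=US$ as a non-unitary isometry (since $V^*V=I$ and $\mathrm{ran}\,V=\overline{\mathrm{span}}\{e_2,e_3,\dots\}\ne H$) and invoke Proposition \ref{2.5} to get $M_V=M$ outright, then apply the scaling identity (\ref{e2.6}) (summed over the supremum, legitimate since $|a_1|\ne0$). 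Your version gets the equality $M_T(p)=M(p(|a_1|z))$ in one stroke and bypasses Lemma \ref{lem 2.9} entirely, at the cost of leaning on the heavier isometry machinery (Lemmas \ref{2.3} and \ref{2.4} behind Proposition \ref{2.5}); the paper's squeeze is more elementary but needs two separate bounds. Both are complete proofs.
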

\begin{proof}
If $|a_1|=|a_n|$ for all $n\geq1$, then by (\ref{e2.7})
$$
M_T(p)\leq M(p(|a_1|z)).
$$
By Lemma \ref{lem 2.9}, we have
$$
M_T^{e_1}(p)=M\big(p(|a_1| z)\big).
$$
Thus
$$
M_T(p)= M\big(p(|a_1|z)\big).
$$
This implies that $M_T$ is multiplicative.

On the other hand, assume that $M_T$ is multiplicative. 
Since
$$
M_T^{e_1}(z^n)=\text{dist}(T^n e_1,[T^{n+1} e_1])=\|T^n e_1\|=\prod_{i=1}^{n}|a_i|,
$$
and for any unit vector $e\in H$,
$$
M_T^{e}(z^n)=\text{dist}(T^n e,[T^{n+1} e])\leq \|T^n e\|\leq \sup_{m\geq1}\prod_{i=m}^{n+m-1}|a_i|=\prod_{i=1}^{n}|a_i|,
$$
we get
$$
M_T(z^n)=\prod_{i=1}^{n}|a_i|.
$$
Since $M_T$ is multiplicative, we have
$$\prod_{i=1}^{n}|a_i|=|a_1|^n, n\geq1.$$
By induction, we get
$$
|a_1|=|a_n| \text{, }n\geq1,
$$
as desired. This completes the proof.
\end{proof}

\begin{cor}
Let $T\in B(H). $ If there is a unit vector $e\in H$ such that $M_{T}^{e}(z)=\|T\|\neq 0$, then $M_{T}^{e}$ is multiplicative if and only if
$$M_{T}^{e}(p)=M(p(\|T\|z)),\ \ p\in \mathbb{C}[z].$$
\end{cor}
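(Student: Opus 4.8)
The plan is to prove the two implications separately; the substantive content is the forward direction, so I would dispose of the easy one first. If $M_T^e(p)=M(p(\|T\|z))$ holds for every $p$, then because the substitution $p\mapsto p(\|T\|z)$ is a multiplicative map of $\mathbb{C}[z]$ into itself and the classical Mahler measure $M$ is multiplicative, one gets $M_T^e(pq)=M\big(p(\|T\|z)q(\|T\|z)\big)=M(p(\|T\|z))\,M(q(\|T\|z))=M_T^e(p)\,M_T^e(q)$, so $M_T^e$ is multiplicative.

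For the converse, write $c=\|T\|$ and assume $M_T^e$ is multiplicative with $M_T^e(z)=c\neq0$. First I would combine multiplicativity on monomials with the hypothesis: since $M_T^e(z^n)=M_T^e(z)^n=c^n$ while $M_T^e(z^n)=\mathrm{dist}(T^n e,[T^{n+1}e])\le\|T^n e\|\le\|T\|^n=c^n$, every inequality collapses to an equality. This forces $\|T^n e\|=c^n$ together with $\mathrm{dist}(T^n e,[T^{n+1}e])=\|T^n e\|$, the latter saying $T^n e\perp[T^{n+1}e]$ for all $n\ge0$ (the case $n=0$, obtained from $M_T^e(z)=M_T^e(1)M_T^e(z)$ and $c\neq0$, gives $M_T^e(1)=1$ and hence $e\perp[Te]$).

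Next I would read off the structure of $T$ on the cyclic subspace $[e]=\overline{\mathrm{span}}\{T^n e:n\ge0\}$. Whenever $n>m$ one has $T^n e\in[T^{m+1}e]$, which is orthogonal to $T^m e$, so $\{T^n e\}_{n\ge0}$ is an orthogonal sequence; normalizing $f_n=T^n e/c^n$ yields an orthonormal basis of $[e]$ with $f_0=e$ and $Tf_n=cf_{n+1}$. Hence $[e]$ is invariant and $T|_{[e]}=cV$, where $V$ is the unilateral shift $f_n\mapsto f_{n+1}$. Since $p(T)e$ and $Tp(T)e$ lie in $[e]$, we have $M_T^e(p)=M_{T|_{[e]}}^e(p)$; and because $V$ is unitarily equivalent to the Hardy shift $S$ carrying $f_0$ to $1$, the unitary-invariance remark gives $M_V^{f_0}=M_S^1=M$. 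Applying the scaling identity (\ref{e2.6}) then yields $M_T^e(p)=M_{cV}^{f_0}(p)=M_V^{f_0}(p(cz))=M(p(cz))=M(p(\|T\|z))$, as desired.

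The main obstacle is the very first step, and it is exactly where the hypothesis $M_T^e(z)=\|T\|$ does its work: multiplicativity by itself only yields $M_T^e(z^n)=(M_T^e(z))^n\le\|T^n e\|$, and without the equality $M_T^e(z)=\|T\|$ one cannot upgrade this to $\|T^n e\|=\|T\|^n$ nor force the orthogonality of the orbit $\{T^n e\}$. It is precisely this orthogonality that makes $T|_{[e]}$ a scaled unilateral shift and thereby pins the measure down to $M(p(\|T\|z))$. I would present that deduction explicitly rather than merely invoke Lemma \ref{lem 2.9}, since here $T$ is a general operator and only its action on the single cyclic subspace $[e]$ turns out to be shift-like.
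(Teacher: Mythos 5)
Your proof is correct and takes essentially the same route as the paper: multiplicativity together with $M_T^e(z)=\|T\|$ collapses the chain $M_T^e(z^n)\le\|T^ne\|\le\|T\|^n$ to equalities, forcing the orbit $\{T^ne\}_{n\ge0}$ to be orthogonal with $\|T^ne\|=\|T\|^n$, so that $T|_{[e]}$ is a scaled unilateral shift and the identity $M_T^e(p)=M(p(\|T\|z))$ follows from the scaling relation (\ref{e2.6}). The only cosmetic difference is that the paper first normalizes to $T/\|T\|$ and then invokes Corollary \ref{cor 2.8} for both the orthonormality of the orbit and the conclusion $M_{T/\|T\|}^e=M$, whereas you carry out that same argument inline for $T$ itself.
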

\begin{proof}
The sufficiency is trivial.

 Suppose that $M_{T}^{e}$ is multiplicative. Then it is not diffcult to verify that
$M_{\frac{T}{\|T\|}}^{e}$ is multiplicative and
$$
 M_{\frac{T}{\|T\|}}^e(z)=M_{T}^e(\frac{z}{\|T\|})=\frac{M_{T}^e(z)}{\|T\|}=1.
$$
Thus $$M_{\frac{T}{\|T\|}}^{e}(z^n)=1, \ \ n\geq 0.$$ By the proof of Corollary \ref{cor 2.8},
one  sees that $\big\{(\frac{T}{\|T\|})^n e\big\}_{n=0}^{\infty}$ is an orthonormal sequence.
Thus
$$M_{\frac{T}{\|T\|}}^{e}(p)=M(p), p\in \mathbb{C}[z].$$
Then by  (\ref{e2.6}), we have
$$
M_{T}^{e}(p)=M_{\|T\|\cdot \frac{T}{\|T\|}}^{e}(p)=M_{ \frac{T}{\|T\|}}^{e}(p(\|T\|z))=M(p(\|T\|z)).
$$
 The proof is complete.
\end{proof}

\section{ Subharmonic Operators}
~~~~In order to generalize Lehmer's problem in the context of operator theory, we will introduce and study an important class of operators, the so-called  subharmonic operators which is closely related to the operator-theoretic Mahler measure.

First, the definition of  subharmonic operators is given as follows:
\begin{Def}
For an operator $T \in B(H)$, if there is a unit vector $e\in H$ such that
$$
\|p(T)e\|\geq |p(0)|, \ \ p\in\mathbb{C}[z],
$$
then   $T$ is called to be subharmonic on $e$.

$T$ is called subharmonic  if for any $\epsilon>0$ there is a unit vector $e\in H$ such that
$$
\|p(T)e\|\geq |p(0)|(1-\epsilon), \ \  p\in\mathbb{C}[z].
$$
\end{Def}
Let us see a simple example of subharmonic operators.
\begin{exam}As mentioned before, $S$ denotes the Hardy shift. For any  $p\in\mathbb{C}[z]$,
$$
||p(S)1||=||p||_{H^2}=\bigg(\frac{1}{2\pi}\int_{-\pi}^{\pi}|p^2(e^{i\theta})|\,\mathrm{d}\theta\bigg)^{\frac{1}{2}}.
$$
Since $|p^2(z)|$ is a subharmonic function on $\mathbb{C}$, we have
$$
\bigg(\frac{1}{2\pi}\int_{-\pi}^{\pi}|p^2(e^{i\theta})|\,\mathrm{d}\theta\bigg)^{\frac{1}{2}}\geq|p(0)|.
$$
Then
$$
||p(S)1||\geq |p(0)|\text{, } p\in\mathbb{C}[z].
$$
Hence $S $ is  subharmonic on $1$. Similarly, the Bergman shift $B$ is also a subharmonic operator.
\end{exam}

By definition, it is easy to see that if $T$ is subharmonic, then \begin{equation}
M_T(p)\geq|p(0)|  ,\ \ p\in \mathbb{C}[z].       \label{miss}
\end{equation}
 Similarly, if $T$ is subharmonic on some unit vector $e$,
 then \begin{equation}
 M_T^e(p)\geq|p(0)|  ,\ \ p\in \mathbb{C}[z].      \label{miss2}
\end{equation}

By (\ref{q1.1}), the original Lehmer's problem is equivalent to the question:
\vskip2mm \emph{Is there a sequence of non-cyclotomic
integral polynomial $p_n$ satisfying  $p_n(0)\neq0$ such that}
$$
\lim_{n\to \infty}M_S(p_n)=1 \  (\mathrm{or} \ \lim_{n\to \infty}M_S^1(p_n)=1)?
$$
Inspired by this, if $T$ is subharmonic (or $T$ is subharmonic on some unit vector $e$). Then we raise the following question for $M_T$ ( or $M_{T}^{e}$):
\vskip2mm \emph{Is there a sequence of non-cyclotomic
integral polynomial $p_n$ satisfying  $p_n(0)\neq0$ such that}
\begin{equation}
\lim_{n\to \infty}M_T(p_n)=1 \  (\mathrm{or} \ \lim_{n\to \infty}M_T^e(p_n)=1)?\label{q3.3}
\end{equation}
This is the operator version of Lehmer's problem and it will be answered under some mild condition in section 4.

\subsection{Properties of subharmonic operators}
Furthermore, the following theorem describes subharmonic operators, and its proof is placed at the end of section 3.2.
\begin{thm}
Suppose $T\in B(H)$, then the following statements are equivalent.      \label{t3.2}
\begin{itemize}
\item[$(1)$]  $M_T(1)=1$.
\item[$(2)$] $T$ is subharmonic.
\item[$(3)$] $T$ is subharmonic on some unit vector $e$.
\item[$(4)$] There is a unit vector $e\in H$ such that $e \perp\overline{\text{\upshape span}}\{Te,T^2 e,\ldots\}$.
\end{itemize}
\end{thm}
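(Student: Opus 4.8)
The plan is to reduce all four statements to the single scalar $M_T^e(1)$, prove the two easy equivalences $(1)\Leftrightarrow(2)$ and $(3)\Leftrightarrow(4)$ separately, and then bridge them with one genuinely nontrivial implication.

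First I would unwind the constant polynomial. Taking $p=1$ gives $p(T)e=e$ and $Tp(T)e=Te$, so $M_T^e(1)=\mathrm{dist}(e,[Te])$ with $[Te]=\overline{\operatorname{span}}\{T^ke:k\ge1\}$. Writing a typical element of $[Te]$ as $Tq(T)e$ and putting $r(z)=1-zq(z)$ (so that $r$ runs over all polynomials with $r(0)=1$) turns this into $M_T^e(1)=\inf\{\|r(T)e\|:r\in\mathbb{C}[z],\ r(0)=1\}$. Two remarks fall out immediately: the choice $r\equiv1$ shows $M_T^e(1)\le\|e\|$, so $M_T^e(1)\le1$ for every unit vector; and for a unit vector the Pythagorean identity $\|e\|^2=\|P_{[Te]}e\|^2+\mathrm{dist}(e,[Te])^2$ shows that $M_T^e(1)=1$ holds precisely when $P_{[Te]}e=0$, i.e. when $e\perp[Te]$, i.e. when $\langle T^ke,e\rangle=0$ for all $k\ge1$.

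The dictionary with subharmonicity is now routine. For a unit vector $e$, dividing by $p(0)$ when $p(0)\ne0$ (the case $p(0)=0$ being vacuous) shows that $\|p(T)e\|\ge|p(0)|$ for all $p$ is the same as $\|r(T)e\|\ge1$ for all $r$ with $r(0)=1$, i.e. as $M_T^e(1)\ge1$, i.e. (since always $M_T^e(1)\le1$) as $M_T^e(1)=1$. Thus $T$ is subharmonic on $e$ iff $M_T^e(1)=1$ iff $e\perp[Te]$, which is exactly $(3)\Leftrightarrow(4)$. The same division turns the approximate inequality $\|p(T)e\|\ge(1-\epsilon)|p(0)|$ into $M_T^e(1)\ge1-\epsilon$, so $T$ is subharmonic iff for every $\epsilon>0$ some unit $e$ has $M_T^e(1)\ge1-\epsilon$ iff $\sup_{\|e\|=1}M_T^e(1)=M_T(1)=1$; this is $(1)\Leftrightarrow(2)$. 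Together with the trivial chain $(4)\Rightarrow(3)\Rightarrow(2)\Rightarrow(1)$ (the exact vector of $(4)$ serves for every $\epsilon$), only one implication remains.

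The crux is $(1)\Rightarrow(4)$: to manufacture one exact subharmonic vector out of a sequence of approximate ones. Starting from unit vectors $e_n$ with $M_T^{e_n}(1)\to1$, i.e. $\|P_{[Te_n]}e_n\|\to0$, I would first note that $\langle T^ke_n,e_n\rangle=\langle T^ke_n,P_{[Te_n]}e_n\rangle$ gives $|\langle T^ke_n,e_n\rangle|\le\|T\|^k\|P_{[Te_n]}e_n\|\to0$ for each fixed $k$. If a subsequence converges weakly to a vector $e$ with $\|e\|=1$, then weak convergence together with $\|e_n\|=\|e\|=1$ forces strong convergence, hence $\langle T^ke,e\rangle=\lim\langle T^ke_n,e_n\rangle=0$ for all $k\ge1$, and $(4)$ holds. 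The main obstacle is precisely the alternative: the weak limit may lose norm ($\|e\|<1$, with mass escaping to infinity). Because subharmonicity is a family of \emph{lower} bounds $\|r(T)e\|\ge1$, and the norm is only weakly lower semicontinuous --- giving $\|r(T)e\|\le\liminf\|r(T)e_n\|$, the wrong direction --- no soft weak-compactness argument preserves it. The natural substitute is to pass to a weak-$*$ cluster point $\omega$ of the vector states $X\mapsto\langle Xe_n,e_n\rangle$ on $B(H)$, which is a state satisfying $\omega(T^k)=0$ for $k\ge1$ and $\omega\big(p(T)^*p(T)\big)\ge|p(0)|^2$; the delicate point, and where the concrete structure of $T$ (rather than abstract functional analysis) must enter, is to realize this ``exact subharmonic state'' as a genuine vector state of $T$ inside $H$, since a weak-$*$ limit of vector states may carry a singular part and need not itself be a vector state.
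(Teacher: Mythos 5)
Your reduction of all four statements to the single scalar $M_T^e(1)=\mathrm{dist}(e,[Te])$ is correct, and your proofs of $(1)\Leftrightarrow(2)$ and $(3)\Leftrightarrow(4)$, together with the trivial chain $(4)\Rightarrow(3)\Rightarrow(2)\Rightarrow(1)$, match the paper's argument. But the one implication you yourself single out as the crux, $(1)\Rightarrow(4)$, is not proved: you correctly diagnose why a weak-compactness argument fails (the weak limit of the $e_n$ may lose norm, and the lower bounds $\|r(T)e\|\ge 1$ are not preserved under weak limits), you then propose passing to a weak-$*$ cluster point of the vector states, and you stop at the admission that such a limit need not be a vector state of $T$ on $H$. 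That admission is accurate --- the GNS construction would only produce an orthogonal vector for \emph{some} representation of the $C^*$-algebra generated by $T$, with no way to pull it back to $H$ --- so as written the cycle of implications is not closed. This is a genuine gap, and the route you sketch is not the right one to try to complete.

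The paper closes the gap with an elementary observation (its Proposition 3.4, $E(T)=M_T(1)\in\{0,1\}$) that needs no limiting process at all: a \emph{single} unit vector $e$ with $\mathrm{dist}(e,[Te])>0$ --- that is, merely $M_T(1)>0$, far weaker than $M_T(1)=1$ --- already yields an exactly orthogonal vector. Indeed, put $H_0=\overline{\mathrm{span}}\bigl(\{e\}\cup[Te]\bigr)$. Since $[Te]$ is $T$-invariant, $TH_0\subseteq[Te]$; and since $e\notin[Te]$, the subspace $H_0\ominus[Te]$ is nonzero, so one may pick a unit vector $e_0$ there (for instance the normalization of $e-P_{[Te]}e$). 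Then $Te_0\in TH_0\subseteq[Te]$, hence $[Te_0]\subseteq[Te]$, hence $e_0\perp[Te_0]$, which is exactly condition $(4)$. In other words, the ``mass escaping to infinity'' scenario never has to be confronted, because each individual approximate vector (indeed any vector witnessing $M_T(1)\neq0$) can be corrected by an orthogonal projection inside the invariant subspace it generates. Inserting this projection trick in place of your weak-$*$ discussion completes the proof; the rest of your write-up can stand as is.
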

The following are some other examples of subharmonic operators.
\begin{lem}
Suppose $T\in B(H)$. Then the followings hold:       \label{3.2}
\begin{itemize}
\item[$(1)$]   If $\ker T\neq\{0\},$ or $ \overline{ran T}\neq H$, then $T$ is subharmonic;
\item[$(2)$]   All weighted shift operators are subharmonic;
\item[$(3)$]  All semi-Fredholm operators with nonzero index are subharmonic.
\end{itemize}
\end{lem}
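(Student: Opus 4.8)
The plan is to reduce all three parts to the equivalence established in Theorem \ref{t3.2}, and in particular to condition $(4)$: $T$ is subharmonic as soon as one can exhibit a single unit vector $e$ with $e\perp\overline{\text{span}}\{Te,T^2e,\ldots\}$. Each of the three classes of operators below produces such a vector essentially for free, so the only work is in locating it. Equivalently, one may argue straight from the definition: if $\langle p(T)e,e\rangle=p(0)$ for every $p$, then $|p(0)|=|\langle p(T)e,e\rangle|\leq\|p(T)e\|$ since $\|e\|=1$, and this is precisely what the orthogonality in condition $(4)$ buys, because $T^ke\perp e$ for $k\geq1$ annihilates all the higher-order terms and leaves only the constant term $p(0)e$.

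For part $(1)$ I would split along the two hypotheses. If $\ker T\neq\{0\}$, pick a unit vector $e\in\ker T$; then $T^ke=0$ for every $k\geq1$, so $\overline{\text{span}}\{Te,T^2e,\ldots\}=\{0\}$ and condition $(4)$ holds trivially (indeed $p(T)e=p(0)e$, whence $\|p(T)e\|=|p(0)|$ on the nose). If instead $\overline{\mathrm{ran}\,T}\neq H$, pick a unit vector $e\in(\overline{\mathrm{ran}\,T})^{\perp}=\ker T^{*}$. For every $k\geq1$ the vector $T^ke=T(T^{k-1}e)$ lies in $\mathrm{ran}\,T$, hence is orthogonal to $e$; therefore $e\perp\overline{\text{span}}\{Te,T^2e,\ldots\}$ and condition $(4)$ applies again.

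For part $(2)$, let $T$ be a weighted shift on an orthonormal basis $\{e_n\}$ (unilateral or bilateral), so that $Te_n$ is a scalar multiple of $e_{n+1}$. Fixing any one basis vector $e_j$, the iterate $T^ke_j$ is a scalar multiple of $e_{j+k}$, which is orthogonal to $e_j$ for all $k\geq1$. Thus $e_j\perp\overline{\text{span}}\{Te_j,T^2e_j,\ldots\}$ and $T$ is subharmonic by condition $(4)$. Note that this argument also covers the invertible case (for instance a bilateral shift with unimodular weights), which part $(1)$ cannot reach since there $\ker T=\{0\}$ and $\overline{\mathrm{ran}\,T}=H$.

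For part $(3)$, I would reduce to part $(1)$ via the definition of the index. If $T$ is semi-Fredholm with $\mathrm{ind}(T)=\dim\ker T-\dim\ker T^{*}\neq0$, then $\dim\ker T\neq\dim\ker T^{*}$, so the two kernels cannot both be trivial. If $\ker T\neq\{0\}$ we are done by the first half of $(1)$; if $\ker T^{*}\neq\{0\}$ then $\overline{\mathrm{ran}\,T}=(\ker T^{*})^{\perp}\neq H$ and we are done by the second half of $(1)$. The only point demanding a little care—and the closest thing to an obstacle here—is the bookkeeping when $\mathrm{ind}(T)=\pm\infty$: one must confirm that a nonzero, possibly infinite, index still forces exactly one of the two nullities to be positive. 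This holds because semi-Fredholmness guarantees at least one of $\dim\ker T$ and $\dim\ker T^{*}$ is finite while their difference is nonzero, so they cannot both vanish. Everything else follows immediately from $(1)$.
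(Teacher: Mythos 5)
Your proof is correct and follows essentially the same route as the paper: in each case you exhibit a unit vector $e$ with $e\perp\overline{\mathrm{span}}\{Te,T^2e,\ldots\}$ and invoke condition $(4)$ of Theorem \ref{t3.2}, deriving $(3)$ from $(1)$ via the index exactly as the paper intends. Your only deviation is treating $(2)$ directly rather than as a corollary of $(1)$, which is actually slightly more careful than the paper's one-line reduction, since a bilateral weighted shift with nonzero weights has trivial kernel and dense range and so is not covered by $(1)$.
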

\begin{proof}

(1).  Assume that either $\ker T\neq\{0\} $ or $ \overline{ranT}\neq H$.
Then pick a unit vector $e $ such that  $e\in \ker\, T$ or  $e\in(\overline{ranT})^{\perp}$, and in either case we have
 $$e \perp\overline{\text{\upshape span}}\{Te,T^2 e,\ldots\}.$$Then it follows that $T$ is subharmonic on $e$.

Both (2) and (3) follow directly from (1). The proof is
 complete.
\end{proof}
By Lemma \ref{3.2}, one  sees that many analytic multiplication operators on function spaces are subharmonic. For
example,  the Hardy shift $S$, the Dirichlet shift \cite{ARSW} and the Bergman shift $B$.

Applying Theorem \ref{t3.2} and Lemma \ref{3.2}, and using a matrix decomposition technique we get the following:
\begin{cor}      \label{l3.6}
If $T\in M_n(\mathbb{C} )$, then $T$ 
 is subharmonic if and only if \linebreak $\ker   T\neq\{0\}$.
\end{cor}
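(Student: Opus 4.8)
The plan is to prove the two implications separately, since one direction is immediate from the preceding results and the substance lies entirely in the converse. For the easy direction, if $\ker T\neq\{0\}$ then $T$ is subharmonic by Lemma \ref{3.2}(1) (choose a unit vector $e\in\ker T$; then $e\perp\overline{\text{span}}\{Te,T^2e,\ldots\}$ trivially). So the work is to show that a subharmonic $T\in M_n(\mathbb{C})$ must have nontrivial kernel.

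For this converse, my first move is to trade the analytic definition of subharmonicity for the geometric criterion supplied by Theorem \ref{t3.2}: since $T$ is subharmonic, the equivalence $(2)\Leftrightarrow(4)$ furnishes a unit vector $e\in H$ with $e\perp\overline{\text{span}}\{Te,T^2e,\ldots\}$. I would then pass to the cyclic invariant subspace $[e]=\overline{\text{span}}\{e,Te,T^2e,\ldots\}$, which is finite-dimensional because $H=\mathbb{C}^n$. Writing $M=\overline{\text{span}}\{Te,T^2e,\ldots\}$, the orthogonality condition says exactly $e\perp M$, hence $e\notin M$ and $[e]=\mathbb{C}e\oplus M$ is an orthogonal decomposition with $\dim M=\dim[e]-1$. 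On the other hand, applying $T$ to the generators $e,Te,T^2e,\ldots$ of $[e]$ produces $Te,T^2e,T^3e,\ldots$, all lying in $M$, so $T[e]\subseteq M$. Thus the restriction $T|_{[e]}\colon[e]\to[e]$ has range contained in the proper subspace $M$.

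The hard part—and the only place where finite dimensionality is essential—is the concluding dimension count: a linear endomorphism of a finite-dimensional space whose range is a proper subspace fails to be surjective, hence (by rank–nullity) fails to be injective, so $\ker(T|_{[e]})\neq\{0\}$ and therefore $\ker T\neq\{0\}$. This is the ``matrix decomposition technique'' alluded to in the statement. I would emphasize that this step genuinely uses $\dim[e]<\infty$: in infinite dimensions the implication breaks down, as the Hardy shift $S$ is subharmonic yet injective, so any correct proof must exploit that $[e]$ is spanned by finitely many iterates $e,Te,\ldots,T^{n-1}e$ and that non-surjectivity forces a nontrivial kernel there.
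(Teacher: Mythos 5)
Your proof is correct and follows essentially the same route as the paper: both directions use Lemma \ref{3.2}(1) for sufficiency and, for necessity, Theorem \ref{t3.2}(4) to produce a unit vector $e$ orthogonal to $M=\overline{\text{span}}\{Te,T^2e,\ldots\}$, then restrict to the finite-dimensional cyclic subspace $[e]$ and observe that the range of $T|_{[e]}$ lies in the proper subspace $M$. Your finish via rank--nullity on $T|_{[e]}$ is a slightly more direct packaging of the paper's argument, which instead decomposes $T$ on $[e]\oplus[e]^{\perp}$, deduces that invertibility of $T$ would force invertibility of $T|_{[e]}$, and derives a contradiction from the same block form.
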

\begin{proof}
If $\ker T\neq\{0\}$, then by  Lemma \ref{3.2}(1)   $T$ is subharmonic.

Suppose $T$ is subharmonic. By Theorem \ref{t3.2}(3), there is a unit vector $e$ such that
 $T$ is subharmonic on $e$. Write
 $$[ e] =\text{\upshape span}\{e,Te,T^2 e,\ldots\}\text{ and }T_1=T|_{[ e]}.$$
 Then   decompose $T$ as
\[
T=\left[
\begin{array}{cc}
T_1&T_2\\
0&T_3
\end{array}\right]
\]
with respect to $[ e] \oplus[ e]^{\perp}.$

Assume conversely that $  \ker T=\{0\}$, then $T$ is invertible, and so is $T_1$.
However, by  Theorem \ref{t3.2}(4), one can decompose $T_1$ as follows:
\[
T_1=\left[
\begin{array}{cc}
0&0\\
T_{1,1}&T_{1,2}
\end{array}\right]
\]
corresponding to $[e]=\text{\upshape span}\{e\}\oplus \text{\upshape span}\{Te,T^2 e,\ldots\}$. This is a contradiction.
The proof is complete.
\end{proof}

 By the proof of Corollary \ref{l3.6}, one can see that: if  $T\in B(H)$ and $T^*=T$, then
  $T$ is subharmonic if and only if $\ker T\neq\{0\}.$

\subsection{The quantity $E(T)$}

Recall that for $T\in B(H)$,
$$M_T(1) =\sup_{||e||=1}\inf_{q\in\mathbb{C}[z] }||[I-q(T)T]e||=\sup_{||e||=1}\text{dist}(e,[Te]).$$
We write $E(T)=M_T(1)$. The quantity $E(T)$ carries key information of $T$ and it will play an important role in the proof of Theorem \ref{t3.2}.

The following inequality establish an interesting connection between the $T$-Mahler measure and $E(T)$.
\begin{prop}
For each $T\in B(H),$ we have                \label{3.3}
$$M_T(p)\leq \|p(T)\|\cdot E(T) ,\ \ p\in \mathbb{C}[z].$$
In particular, if T is a contraction, then
$$M_T(p)\leq \|p\|_{\infty}\cdot E(T).$$
\end{prop}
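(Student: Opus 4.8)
The plan is to reduce the bound to the single quantity $E(T)=M_T(1)$ by exploiting a scaling (homogeneity) property of the $T$-Mahler measure on a vector. Fix a unit vector $e\in H$ and write $f=p(T)e$. If $f=0$ then $M_T^e(p)=0$ and there is nothing to prove, so I would assume $f\neq 0$ and set $g=f/\|f\|$, a unit vector.

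The first key step is to observe that the invariant subspace $[Tf]$ is unchanged under scaling its generator, since $[T(cf)]=[Tf]$ for every nonzero scalar $c$. Consequently $\text{dist}(f,[Tf])=\|f\|\cdot\text{dist}(g,[Tg])$. In the notation of the $T$-Mahler measure this reads
$$
M_T^e(p)=\text{dist}(p(T)e,[Tp(T)e])=\|p(T)e\|\cdot M_T^{g}(1),
$$
where on the right $g=p(T)e/\|p(T)e\|$ and $M_T^{g}(1)=\text{dist}(g,[Tg])$ by definition of the constant polynomial $p=1$.

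The second step is to bound each of the two factors separately. On one hand, since $g$ is a unit vector, $M_T^{g}(1)\le \sup_{\|h\|=1}M_T^{h}(1)=M_T(1)=E(T)$. On the other hand, $\|p(T)e\|\le \|p(T)\|\cdot\|e\|=\|p(T)\|$. Multiplying these two estimates gives $M_T^e(p)\le \|p(T)\|\cdot E(T)$ for every unit vector $e$, and taking the supremum over all such $e$ yields $M_T(p)\le\|p(T)\|\cdot E(T)$, which is the main assertion.

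Finally, for the contraction case I would invoke von Neumann's inequality $\|p(T)\|\le\|p\|_{\infty}$, available once $\|T\|\le 1$, to replace $\|p(T)\|$ by $\|p\|_{\infty}$ in the bound just obtained. I do not expect any genuine obstacle; the only points requiring a little care are dispatching the degenerate case $p(T)e=0$ before normalizing, and the verification that rescaling the generator of $[Tf]$ leaves the subspace intact while scaling the distance by the same factor—both of which are elementary.
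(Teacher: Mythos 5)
Your proposal is correct and is essentially the paper's own argument: the paper likewise factors $\|[I-q(T)T]p(T)e\| = \|p(T)e\|\cdot\|[I-q(T)T]\tfrac{p(T)e}{\|p(T)e\|}\|$, bounds the normalized infimum by $M_T(1)=E(T)$ and $\|p(T)e\|$ by $\|p(T)\|$, then invokes von Neumann's inequality for the contraction case. Your explicit handling of the degenerate case $p(T)e=0$ is a small point of extra care that the paper omits, but it is not a different route.
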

\begin{proof}
By the definition of $M_T(p)$, we have
\begin{eqnarray*}
M_T(p)&=&\sup_{\|e\|=1} \inf_{q\in \mathbb{C}[z]}\|[I-q(T)T]p(T)e\|\\
      &=&\sup_{\|e\|=1} \bigg\{\|p(T)e\|\cdot \inf_{q\in \mathbb{C}[z]}\|[I-q(T)T]\frac{p(T)e}{\|p(T)e\|}\|\bigg\}\\
      &\leq&\sup_{\|e\|=1} \|p(T)e\|\cdot M_T(1)\\
      &=&\|p(T)\|\cdot E(T).
\end{eqnarray*}
If $T$ is a contraction, then by von Neumann's inequality $\|p(T)\|\leq \|p\|_{\infty}$, and hence
$$M_T(p)\leq \|p\|_{\infty}\cdot E(T).$$
The proof is complete.
\end{proof}
As a consequence of Proposition \ref{3.3}, if $E(T)=0$, then $M_T\equiv 0$. Also,
$E(T)$ is a unitary invariant for $T$. 

Lemma \ref{2.4} says that if $T$ is an isometry, then $E(T)=0$ or $1$.
The following gives a  generalization of this result, with  a shorter proof.
\begin{prop}
For each $T\in B(H)$, $E(T)\in \{0,1\}$.           \label{3.4}
\end{prop}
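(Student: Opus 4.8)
The plan is to prove the dichotomy by first establishing the trivial bound $E(T)\le 1$ and then showing that as soon as $E(T)>0$ one can manufacture a single unit vector realizing distance exactly $1$, forcing $E(T)=1$. The upper bound is immediate: for any unit vector $e$ the set $[Te]=\overline{\text{span}}\{Te,T^2e,\ldots\}$ is a closed subspace of $H$, so $\text{dist}(e,[Te])\le\|e\|=1$, and taking the supremum gives $E(T)\le 1$. Thus the whole content is to rule out values strictly between $0$ and $1$.

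For the main step I would assume $E(T)>0$. By the definition of the supremum there is then a unit vector $e$ with $d:=\text{dist}(e,[Te])>0$. The key move is to replace $e$ by its component orthogonal to $[Te]$: set $f=e-P_{[Te]}e$, where $P_{[Te]}$ is the orthogonal projection onto $[Te]$. Then $\|f\|=d$ and $f\perp[Te]$. The crucial observation is the inclusion $[Tf]\subseteq[Te]$. This rests on the fact that $[Te]$ is a closed $T$-invariant subspace: $T$ sends each generator $T^k e$ to $T^{k+1}e\in[Te]$, so by linearity and continuity $T[Te]\subseteq[Te]$. Since $P_{[Te]}e\in[Te]$, we get $Tf=Te-T(P_{[Te]}e)\in[Te]$, and iterating, $T^n f\in[Te]$ for every $n\ge 1$; hence $[Tf]\subseteq[Te]$.

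With this inclusion in hand the conclusion follows quickly. Because $f\perp[Te]\supseteq[Tf]$, the projection of $f$ onto $[Tf]$ vanishes, so $\text{dist}(f,[Tf])=\|f\|=d$. Normalizing, put $\hat f=f/d$; scaling by the nonzero constant $1/d$ leaves the generated subspace unchanged, so $[T\hat f]=[Tf]$, and the distance scales correspondingly, giving $\text{dist}(\hat f,[T\hat f])=\tfrac{1}{d}\,\text{dist}(f,[Tf])=1$. Since $\hat f$ is a unit vector, $E(T)\ge\text{dist}(\hat f,[T\hat f])=1$, and combined with $E(T)\le 1$ this yields $E(T)=1$.

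The only genuinely nontrivial point, and the step I would treat most carefully, is the inclusion $[Tf]\subseteq[Te]$: it depends on the $T$-invariance of $[Te]$ together with the observation that passing to the orthogonal complement of $e$ relative to $[Te]$ does not escape $[Te]$ once $T$ is applied. Everything else—the norm bound $\text{dist}(e,[Te])\le 1$, the orthogonality $f\perp[Tf]$, and the scaling of distances under normalization—is routine. This argument is notably shorter than the isometry computation in Lemma \ref{2.4}, since it bypasses the Wold decomposition and the Szeg\"o-type evaluation entirely, working purely at the level of invariant subspaces and orthogonal projections.
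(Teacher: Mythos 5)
Your proof is correct and is essentially the paper's argument: the unit vector $e_0\in H_0\ominus[Te]$ that the paper produces after restricting $T$ to $H_0=\overline{\text{span}}\{e,[Te]\}$ is exactly your normalized $f=e-P_{[Te]}e$, and both proofs turn on the same inclusion $[Tf]\subseteq[Te]$ coming from the $T$-invariance of $[Te]$. Your version merely streamlines matters by working directly in $H$ instead of passing to the restriction $T_0=T|_{H_0}$ and invoking $M_{T_0}(1)\leq M_T(1)$.
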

\begin{proof}
Suppose $M_T(1)\neq 0$, and we will show that  $M_T(1)= 1$ to finish the proof.

Since
$M_T(1)\neq 0$, there is a unit vector $e\in H$ such that
$$\text{dist}(e,[Te])\neq 0.$$
Write
$H_0=\overline{\text{\upshape span}}\{e,[Te]\}$, and put $T_0=T|_{H_0}$.

Since
$$\overline{\text{ran}T_0}=[Te]\varsubsetneqq H_0,$$
there is a unit vector $e_0\in H_0\ominus[Te]$.

Since
 $[T_0 e_0]\subseteq [Te] $, we get  $$ e_0 \perp [T_0 e_0].$$
Hence dist$(e_0,[T_0e_0])=1$, forcing $M_{T_0}(1)= 1$. On the other hand, it is easy to see that $$ M_{T_0}(1)\leq M_T(1)\leq 1.$$
Therefore,  $M_{T}(1)=1$,  completing the proof.
\end{proof}

We now will give the proof of Theorem \ref{t3.2}.

\noindent \textbf{Proof of Theorem \ref{t3.2}.}

$(2) \Rightarrow (1)$. If $T$ is subharmonic, then for any $\epsilon>0$ there is a unit vector $e$ such that
$$
\|p(T)e\|\geq |p(0)|\cdot(1-\epsilon), \ \ p\in\mathbb{C}[z].
$$
In particular, put  $p(z)=1-zq(z)$, and then
$$
\|[I-q(T)T]e\|\geq 1-\epsilon , \ \ q\in\mathbb{C}[z].
$$
This means that
$$
 1-\epsilon \leq M_T^e(1)\leq M_T(1)\leq 1.
$$
By    arbitrariness of $\epsilon$, we have $M_T(1)= 1$.

$(1 )\Rightarrow (2)$. Suppose that $M_T(1)=1$. Then for any $\epsilon>0$ there is a unit vector $e$ such that
$$1-\epsilon\leq M_T^e(1)\leq1,$$
which implies that
$$
\|[I-q(T)T]e\|\geq 1-\epsilon, \  \ q\in\mathbb{C}[z].
$$
Therefore
$$
\|p(T)e\|\geq |p(0)|\cdot(1-\epsilon),\ \ p\in\mathbb{C}[z].
$$

$(4) \Rightarrow(3)$ is trivial.

$(3)\Rightarrow (4)$. If $T$ is subharmonic on unit vector $e$, then $$\|p(T)e\|\geq |p(0)| , \ \ p\in\mathbb{C}[z].$$
In particular, for any $q\in\mathbb{C}[z]$, we have
$$
\|[I-q(T)T]e\|\geq 1 ,
$$
which gives dist$(e,[Te])=1$.
Thus,  $e \perp\overline{\text{\upshape span}}\{Te,T^2 e,\ldots\}$.
\vskip2mm
$(3) \Rightarrow(2)$ is trivial.

$(2) \Rightarrow(3)$. If $T\in B(H)$ is subharmonic. Then $M_T(1)=1$. By the proof of Proposition \ref{3.4}, there is a unit vector $e\in H$ such that
$$e \perp\overline{\text{\upshape span}}\{Te,T^2 e,\ldots\}.$$
Thus $T$ is subharmonic on $e$.

 The proof of the theorem is complete.
$\hfill \square$
\vskip2mm

\section{Applications}
~~~~In this section, we will apply the results of previous sections to the operator version of Lehmer's
problem (\ref{q3.3}).

The following is our main result in this section, which answers (\ref{q3.3}) under some mild condition. As applications, we give new proofs of some known results.
\begin{thm} Suppose $T\in B(H)$ is subharmonic on some unit vector $e$,  contractive and
$T^n\stackrel{SOT}{\rightarrow}0$.
Then there is a sequence of non-cyclotomic integral polynomials $\{p_n\} $ with $p_n(0)\neq0$, such that \label{3.9}
$$
\lim_{n\to \infty}M_{T}^{e}(p_n)=1.
$$
\end{thm}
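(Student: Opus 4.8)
The plan is to squeeze $M_T^e(p_n)$ between $1$ and $1+o(1)$ for a cleverly chosen family of integral polynomials. The lower bound is cheap: since $T$ is subharmonic on $e$, inequality (\ref{miss2}) gives $M_T^e(p)\ge |p(0)|$ for every $p\in\mathbb{C}[z]$. When $p$ is an \emph{integral} polynomial with $p(0)\ne 0$, the value $p(0)$ is a nonzero integer, so $|p(0)|\ge 1$ and hence $M_T^e(p)\ge 1$ automatically. Thus the only real task is to exhibit non-cyclotomic integral $p_n$ with $p_n(0)\ne 0$ for which $M_T^e(p_n)$ can be pushed down toward $1$.

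For the upper bound I would use the crudest competitor $q=0$ in the infimum $M_T^e(p)=\inf_{q\in\mathbb{C}[z]}\|[I-q(T)T]p(T)e\|$, which yields $M_T^e(p)\le \|p(T)e\|$. It therefore suffices to arrange $\|p_n(T)e\|\to 1$. The device is to put all non-constant terms of $p_n$ at high degree: writing $p_n(z)=1+z^n r(z)$ gives $p_n(T)e=e+T^n r(T)e$, and since $r(T)e$ is a \emph{fixed} vector while $T^n\stackrel{SOT}{\rightarrow}0$, one gets $\|T^n r(T)e\|\to 0$, so that $\|p_n(T)e\|\le 1+\|T^n r(T)e\|\to 1$.

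Concretely I would take $r(z)=1-z$, i.e.
$$
p_n(z)=1+z^n-z^{n+1}.
$$
These are integral with $p_n(0)=1\ne 0$, and $p_n(T)e=e+T^n(I-T)e$, so the upper bound above applies verbatim. Non-cyclotomicity is then checked by a sign change: $p_n(1)=1>0$ while $p_n(x)\to-\infty$ as $x\to+\infty$, so $p_n$ has a real root in $(1,\infty)$; a root of modulus $>1$ forces $M(p_n)>1$, whence $p_n$ is non-cyclotomic for every $n\ge 1$. Combining the two estimates gives $1\le M_T^e(p_n)\le 1+\|T^n(I-T)e\|\to 1$, which is the assertion.

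The step I expect to be the genuine obstacle is reconciling ``non-cyclotomic'' with ``$M_T^e(p_n)\to 1$'': one is tempted to combine the contraction bound $M_T^e(p_n)\le M(p_n)$ from Corollary \ref{cor 2.8} with a sequence $M(p_n)\to 1$, but producing such non-cyclotomic $p_n$ is exactly the open Lehmer problem. The way out is that $M_T^e$ can be \emph{strictly} smaller than $M$: concentrating the variable part of $p_n$ at high degree lets the hypothesis $T^n\stackrel{SOT}{\rightarrow}0$ annihilate it on the fixed vector $(I-T)e$, driving $M_T^e(p_n)\to 1$ while $M(p_n)$ stays bounded away from $1$. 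Note that contractivity enters only mildly here, since the decisive inputs are subharmonicity for the lower bound and the strong-operator decay for the upper bound.
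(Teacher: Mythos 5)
Your proof is correct, and while it shares the paper's overall squeeze strategy (the lower bound $M_T^e(p_n)\ge|p_n(0)|=1$ from subharmonicity via (\ref{miss2}), and an upper bound driven to $1$ by the strong-operator decay of $T^n$), your upper bound goes by a genuinely different and more elementary route. The paper takes $p_n=z^n+z+1$ and must first establish $M_T^e(z+1)=1$ (Remark \ref{3.8}, which rests on the dilation-theoretic inequality $M_T^e(p)\le M(p)$ of Theorem \ref{2.6} and hence on contractivity); it then extracts a near-optimal competitor $q$ for $z+1$ and checks that this same $q$ remains near-optimal for $z^n+z+1$ as $n\to\infty$. By instead concentrating the entire non-constant part of $p_n$ at high degree, $p_n(z)=1+z^n(1-z)$, you make the trivial competitor $q=0$ suffice: $M_T^e(p_n)\le\|p_n(T)e\|\le 1+\|T^n(I-T)e\|\to 1$. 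This bypasses Remark \ref{3.8} and the dilation machinery entirely, and, as you observe, it never uses $\|T\|\le 1$, so your argument in fact proves the theorem without the contractivity hypothesis. Your non-cyclotomicity check (a sign change forces a real root in $(1,\infty)$, hence $M(p_n)>1$) is sound; if one prefers monic polynomials in the spirit of Lehmer's problem, replace $p_n$ by $-p_n=z^{n+1}-z^n-1$, which changes neither $M_T^e$ nor the Mahler measure.
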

The proof of Theorem \ref{3.9} is given after Remark \ref{3.8}.
\begin{cor}
Suppose $T\in B(H)$ is subharmonic and $\|T\|<1$. \label{3.10}
Then there is a sequence of non-cyclotomic polynomials $\{p_n\}\in \mathbb{Z}[z]$ with \linebreak $p_n(0)\neq0$, such that
$$
\lim_{n\to \infty}M_{T}(p_n)=1.
$$
\end{cor}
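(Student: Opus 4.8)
The plan is to sandwich $M_T(p_n)$ between a lower bound coming from subharmonicity and an upper bound coming from Proposition \ref{3.3}, evaluated on a concrete sequence of integral polynomials whose behaviour under $T$ we can control directly. Before choosing the $p_n$, observe that $\|T\|<1$ already yields everything needed: $T$ is a contraction, and $\|T^n\|\le\|T\|^n\to0$, so $T^n\to0$ even in norm (a fortiori $SOT$); being subharmonic, $T$ is subharmonic on some unit vector by Theorem \ref{t3.2}, and, most importantly, $E(T)=M_T(1)=1$ by the equivalence $(1)\Leftrightarrow(2)$ of Theorem \ref{t3.2}. Thus the hypotheses of Theorem \ref{3.9} hold, and the present statement is a refinement from $M_T^e$ to the full supremum $M_T$.

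Rather than merely quote the sequence produced by Theorem \ref{3.9}, I would exhibit the polynomials explicitly, since for $M_T$ (a supremum over \emph{all} unit vectors) I need to control $\|p_n(T)\|$, not just $M_T^e(p_n)$ for one $e$. Fix $g(z)=z-3\in\mathbb{Z}[z]$ and set
\begin{equation*}
p_n(z)=1+z^n g(z)=1-3z^n+z^{n+1}.
\end{equation*}
Each $p_n$ is monic and integral with $p_n(0)=1\neq0$. Moreover $p_n(1)=1+g(1)=-1<0$, whereas every cyclotomic polynomial, being a monic product $\prod_d\Phi_d^{e_d}$, satisfies $p(1)=\prod_d\Phi_d(1)^{e_d}\ge0$ (as $\Phi_1(1)=0$ and $\Phi_d(1)>0$ for $d\ge2$); hence every $p_n$ is non-cyclotomic.

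For the lower bound, subharmonicity of $T$ and (\ref{miss}) give $M_T(p_n)\ge|p_n(0)|=1$. For the upper bound, Proposition \ref{3.3} combined with $E(T)=1$ yields
\begin{equation*}
M_T(p_n)\le\|p_n(T)\|\cdot E(T)=\|I+T^n g(T)\|\le 1+\|T\|^n\,\|T-3I\|,
\end{equation*}
and the right-hand side tends to $1$ because $\|T\|<1$. Combining the two estimates gives $1\le M_T(p_n)\le 1+\|T\|^n\|T-3I\|\to1$, so $\lim_{n\to\infty}M_T(p_n)=1$, as required.

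The step carrying the real content is the upper bound. The naive estimate $M_T(p_n)\le M(p_n)$ furnished by Theorem \ref{2.6} is useless here, since $M(p_n)$ stays bounded away from $1$ (the $p_n$ are non-cyclotomic integral polynomials), so one cannot promote the single-vector conclusion of Theorem \ref{3.9} to the supremum $M_T$ along that route. The gain comes entirely from replacing the classical Mahler-measure bound by the operator bound $M_T(p)\le\|p(T)\|\,E(T)$ and from using the \emph{strict} contractivity $\|T\|<1$ (not mere contractivity) to annihilate the high-order part $T^n g(T)$ of $p_n(T)$. The only number-theoretic point, non-cyclotomicity of the $p_n$, is disposed of cheaply through the sign of $p_n(1)$.
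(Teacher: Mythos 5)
Your argument is correct, but the upper bound travels a genuinely different road from the paper's. The paper's one-line proof combines the lower bound (\ref{miss}) with the rescaling identity (\ref{e2.6}): since $T/\|T\|$ is a contraction, (\ref{e2.5}) and (\ref{e2.6}) give $M_T(p)\le M\big(p(\|T\|z)\big)$ (this is (\ref{e2.7})), and for the polynomials $p_n=z^n+z+1$ of Theorem \ref{3.9} one has $M\big(p_n(rz)\big)=M(r^nz^n+rz+1)\to M(rz+1)=1$ when $r=\|T\|<1$, because the single root of $rz+1$ lies outside the closed unit disk and the perturbation $r^nz^n$ is uniformly negligible against $|rz+1|\ge 1-r$ on $\mathbb{T}$. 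So the route through the classical Mahler measure does work here --- not with $M(p_n)$, as you rightly observe, but with $M\big(p_n(\|T\|z)\big)$; your closing remark overlooks this rescaling. (Incidentally, the parenthetical reason you give for $M(p_n)$ staying away from $1$ --- that the $p_n$ are non-cyclotomic and integral --- is precisely Lehmer's conjecture and cannot be invoked; for your specific $p_n$ one instead checks directly that $|p_n(z)|\ge|z-3|-1\ge\tfrac12|z-3|$ on $\mathbb{T}$, whence $M(p_n)\ge\tfrac32$.) Your alternative replaces the Mahler-measure bound by Proposition \ref{3.3} together with $E(T)=M_T(1)=1$ from Theorem \ref{t3.2}, which forces you to redesign the polynomials as $p_n=1+z^ng(z)$ with $g(z)=z-3$ so that $\|p_n(T)-I\|\le\|T\|^n\|g(T)\|\to0$, and to verify non-cyclotomicity by hand via $p_n(1)=-1<0$ (a correct argument, since a cyclotomic polynomial in the paper's sense is a product of the $\Phi_d$ and so takes a nonnegative value at $1$). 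What this buys is a self-contained proof that bypasses Theorem \ref{3.9} and every Szeg\H{o}-type computation, reducing the whole corollary to an operator-norm estimate; what it costs is the ad hoc choice of $g$ and the extra number-theoretic check. Both arguments use the strict inequality $\|T\|<1$ in the same essential way, and both are valid.
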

\begin{proof}
Combine (\ref{e2.6}), (\ref{miss}) and Theorem \ref{3.9}.
\end{proof}

To prove Theorem \ref{3.9} we need the following proposition.
\begin{prop}
Let $T\in B(H)$ be a subharmonic contraction. Then  for any cyclotomic  polynomial $p$,\label{3.7}
$$M_{T}(p)=1.$$
\end{prop}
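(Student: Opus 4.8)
The plan is to sandwich $M_T(p)$ between two quantities that both equal $1$ when $p$ is cyclotomic: the classical Mahler measure $M(p)$ furnishing an upper bound, and the constant term modulus $|p(0)|$ furnishing a matching lower bound.

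First I would establish the upper bound. Since $T$ is a contraction, Theorem \ref{2.6} gives $M_T(p)\le M(p)$ for every $p\in\mathbb{C}[z]$. When $p$ is cyclotomic it is, by definition, a monic integral polynomial all of whose zeros are roots of unity, so Kronecker's theorem (recalled in the introduction) yields $M(p)=1$. Hence $M_T(p)\le 1$.

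Next I would establish the matching lower bound. Because $T$ is subharmonic, inequality (\ref{miss}) gives $M_T(p)\ge |p(0)|$. Writing $p(z)=\prod_{i=1}^{d}(z-\alpha_i)$ with each $\alpha_i$ a root of unity, we have $|p(0)|=\prod_{i=1}^{d}|\alpha_i|=1$, since $|\alpha_i|=1$ for every $i$. Thus $M_T(p)\ge 1$.

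Combining the two inequalities forces $M_T(p)=1$, as claimed. There is essentially no serious obstacle in the argument; the only point worth stating carefully is the joint observation that a cyclotomic polynomial simultaneously satisfies $M(p)=1$ (via Kronecker) and $|p(0)|=1$ (since all its roots lie on $\mathbb{T}$). It is precisely this coincidence, together with the subharmonic and contractive hypotheses feeding the two bounds, that pins $M_T(p)$ down to $1$.
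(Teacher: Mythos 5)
Your proposal is correct and follows essentially the same route as the paper: the upper bound $M_T(p)\le M(p)=1$ from Theorem \ref{2.6} together with the lower bound $M_T(p)\ge|p(0)|=1$ from (\ref{miss}). (The only cosmetic difference is that you invoke Kronecker's theorem for $M(p)=1$, whereas this already follows directly from the definition of $M(p)$ since $p$ is monic with all roots on $\mathbb{T}$.)
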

\begin{proof}
 Assume that $T\in B(H)$ is a subharmonic operator with $\|T\|\leq1$. Then by Theorem \ref{2.6}  we have
$$
M_{T}(q)\leq  M(q), \ \ q\in\mathbb{C}[z].
$$
For each cyclotomic polynomial $p$, we have
$$
M_{T}(p)\leq M(p)=1.
$$
Since $p$ is a cyclotomic polynomial, then $|p(0)|=1$. By (\ref{miss}), we have
$$
M_{T}(p)\geq |p(0)|=1.
$$
Therefore $M_{T}(p)=1$, as desired. The proof is complete.
\end{proof}
\begin{rem}
Similarly,   one can show that if $T$ is a contraction and $T$ is subharmonic on some unit vector $e$,
then  $M_{T}^{e} (p)=1$
for any cyclotomic  polynomial $p$.
Indeed, it is a nontrivial result, because on Bergman space $L_a^2 (\mathbb{D})$ it is hard to compute \label{3.8}
$M_{B}^{1}(z+1)$ via definition.
\end{rem}
Now we come to the proof of Theorem \ref{3.9}.

\noindent \textbf{Proof of Theorem \ref{3.9}.}
Write $p_n=z^n+z+1(n>2)$, which is a sequence of non-cyclotomic polynomials [EW, p.78, Exercise 3.12].
Since $T$ is subharmonic on $e$ and $\|T\|\leq 1$, by Remark \ref{3.8} we have
$$
M_{T}^{e}(z+1)=1.
$$
Then by definition of $M_{T}^{e}$, for any $\epsilon >0$, there is a polynomial $q$ such that
$$
\|[I-q(T)T ](T+1)e \|<1+\epsilon.
$$
Since $
\lim_{n\to\infty}\|T^n e\|=0,
$ we have
\begin{eqnarray*}
\lim_{n\to \infty}\|[I-q(T)T ](T^n+T+1)e \|&=&\|[I-q(T)T](T+1)e \|\\
&<&1+\epsilon.
\end{eqnarray*}
Then there is a natural number $N$ such that
$$
\|[I-q(T)T](T^n+T+1)e \|<1+\epsilon, \ \ n\geq N.
$$
By (\ref{miss2}), we have $M_{T}^{e}(p)\geq|p(0)|$ for all $p\in \mathbb{C}[z]$, and hence
$$
1\leq M_{T}^{e}(z^n+z+1)< 1+\epsilon,\ \ n\geq N.
$$
By   arbitrariness of $\epsilon$, we have
$$
\lim_{n\to \infty}M_{T}^{e}(z^n+z+1)=1,
$$
 completing the proof.
$\hfill \square$
\vskip2mm

As an application of  Theorem \ref{3.9}, we have the following example. 
\begin{exam}
Recently, {\upshape I.Pritsker \cite{Pr08a}} defined an areal analog of Mahler's measure as the following: For each polynomial $p\in \mathbb{C}[z]$
$$
\|p\|_{0}\triangleq \exp\bigg\{\int_{\mathbb{D}}\log|p(z)|\mathrm{d}A(z)\bigg\}.\label{e4.5}
$$
This can be regarded as a Bergman-space version of Mahler's measure.

Notice that for each $p\in \mathbb{Z}[z]$ with $p(0)\neq 0$,   $\log|p(z)|$ is subharmonic on $\mathbb{D}$ and $\log |p(0)|\geq 0$, and then
$ ||p||_{0}\geq 1$. Later, below Proposition \ref{3.14} we will apply  Theorem \ref{3.9} to show that $$
\lim_{n\to \infty}||p_n||_{0}=1.
$$
\end{exam}
We have the following relations on $||p||_{0}$, $ M_{B}^{1}(p)$ and $ M(p)$. 
\begin{prop}
On Bergman space $L_a^2 (\mathbb{D})$,
$$
||p||_{0} \leq M_{B}^{1}(p)\leq M(p)\text{, } p\in\mathbb{C}[z],\label{p4.6}
$$ where $B$ is the Bergman shift.\label{3.14}
\end{prop}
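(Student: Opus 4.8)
The plan is to prove the two inequalities separately. The right-hand bound $M_{B}^{1}(p)\le M(p)$ is almost immediate. The Bergman shift is a contraction on $L_a^2(\mathbb{D})$, since
$$\|Bf\|^{2}=\int_{\mathbb{D}}|z|^{2}|f(z)|^{2}\,\mathrm{d}A(z)\le\int_{\mathbb{D}}|f(z)|^{2}\,\mathrm{d}A(z)=\|f\|^{2},$$
and the constant function $1$ is a unit vector because $\mathrm{d}A$ is the normalized area measure, so $\|1\|^{2}=\int_{\mathbb{D}}\mathrm{d}A=1$. Hence the bound follows at once from Corollary \ref{cor 2.8} applied with $T=B$ and $e=1$.

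The left-hand bound $\|p\|_{0}\le M_{B}^{1}(p)$ is the substantive part, and I would argue it as follows. Since $B$ is multiplication by $z$, one has $p(B)1=p$ and $[I-q(B)B]p(B)1=(1-qz)p$, so that
$$M_{B}^{1}(p)=\inf_{q\in\mathbb{C}[z]}\bigg(\int_{\mathbb{D}}|(1-qz)p|^{2}\,\mathrm{d}A\bigg)^{1/2}.$$
It therefore suffices to prove $\int_{\mathbb{D}}|(1-qz)p|^{2}\,\mathrm{d}A\ge\|p\|_{0}^{2}$ for every $q\in\mathbb{C}[z]$. The main tool is the integral arithmetic--geometric mean inequality (Jensen's inequality for the concave function $\log$) on the probability space $(\mathbb{D},\mathrm{d}A)$: for nonnegative $f$,
$$\int_{\mathbb{D}}f\,\mathrm{d}A\ \ge\ \exp\bigg\{\int_{\mathbb{D}}\log f\,\mathrm{d}A\bigg\}.$$
Applying this to $f=|(1-qz)p|^{2}$ and splitting the logarithm reduces matters to showing $\int_{\mathbb{D}}\log|1-qz|\,\mathrm{d}A\ge0$.

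This last inequality is where the geometry of the Bergman space enters. The key observation is that $g(z)=1-q(z)z$ is a polynomial with $g(0)=1$, so $\log|g|$ is subharmonic on $\mathbb{D}$, and the solid (areal) sub-mean value inequality for subharmonic functions gives
$$\int_{\mathbb{D}}\log|1-qz|\,\mathrm{d}A\ \ge\ \log|g(0)|\ =\ 0,$$
which is precisely the subharmonicity phenomenon exploited earlier for the Hardy shift, but now with an areal average in place of a circular one. Combining the three displays yields $\int_{\mathbb{D}}|(1-qz)p|^{2}\,\mathrm{d}A\ge\exp\{\int_{\mathbb{D}}\log|p|^{2}\,\mathrm{d}A\}=\|p\|_{0}^{2}$; taking the infimum over $q$ and a square root then finishes the proof.

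The step I expect to require the most care is the justification that all the logarithmic integrals are well defined: $\log|p|$ and $\log|1-qz|$ have logarithmic singularities at the zeros of $p$ and of $g$, but these are locally integrable in the plane, so each integral is finite (and should some $\int_{\mathbb{D}}\log|\cdot|^{2}\,\mathrm{d}A$ equal $-\infty$, the desired bound is trivial). Once integrability is secured, both Jensen's inequality and the subharmonic mean-value estimate apply without obstruction.
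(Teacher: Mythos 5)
Your proposal is correct and follows essentially the same route as the paper: the upper bound via contractivity of $B$ and Corollary \ref{cor 2.8}, and the lower bound via Jensen's (arithmetic--geometric mean) inequality on the probability space $(\mathbb{D},\mathrm{d}A)$ combined with the areal sub-mean value inequality for the subharmonic function $\log|1-q(z)z|$, which vanishes at the origin. The only addition is your remark on the integrability of the logarithmic singularities, which the paper leaves implicit.
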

\begin{proof}
Observe that $B$ is subharmonic on the unit vector $1$
and $\|B\|\leq 1$.
Then by Corollary 2.8,
$$ M_{B}^{1}(p)\leq M(p)\text{, } p\in\mathbb{C}[z].$$

It remains to show that
$$||p||_{0} \leq M_{B}^{1}(p) , \ \  p\in \mathbb{C}[z].$$
For this, notice that
$$
M_{B}^{1}(p)=\bigg(\inf_{q\in\mathbb{C}[z],q(0)=0}\int_{\mathbb{D}}|[1-q(z)]p(z) |^2\mathrm{d}A(z)\bigg)^{\frac{1}{2}}.
$$
Since $\mathrm{d}A(z) $
 is a probability measure,   then by Jensen's Inequality [Ru, p.62, Theorem 3.3], for each $q\in\mathbb{C}[z]$ with $q(0)=0$ we have
 \begin{eqnarray*}
&&\int_{\mathbb{D}}|[1-q(z)]p(z)|^2\mathrm{d}A(z)\\
&\geq & \exp\bigg\{\int_{\mathbb{D}}\log|[1-q(z)]p(z)|^2\mathrm{d}A(z)\bigg\}\\
&=&\exp\bigg\{\int_{\mathbb{D}}\log|1-q(z)|^2\mathrm{d}A(z)+\int_{\mathbb{D}}\log|p(z) |^2\mathrm{d}A(z)\bigg\}.
\end{eqnarray*}
Since the function $\log|1-q(z)|^2$ is subharmonic on $\mathbb{D}$ and $\log|1-q(0)|^2=0$, 
we have
\begin{eqnarray*}
&&\exp\bigg\{\int_{\mathbb{D}}\log|1-q(z)|^2\mathrm{d}A(z)+\int_{\mathbb{D}}\log|p(z) |^2\mathrm{d}A(z)\bigg\}\\
&\geq &\exp\bigg\{\int_{\mathbb{D}}\log|p(z)|^2\mathrm{d}A(z)\bigg\}\\
&=&||p||_0^2.
\end{eqnarray*}
Thus
$$||p||_{0} \leq M_{B}^{1}(p),\ \ p\in \mathbb{C}[z]$$ as desired. This completes the proof.
 \end{proof}

Observe that the Bergman shift $B$ is subharmonic on the unit vector 1,
$\|B\|\leq 1$ and $B^n\stackrel{SOT}{\rightarrow}0$. Write $p_n(z)=z^n+z+1$. Then by Proposition \ref{3.14},
$$
1\leq ||p_n||_{0} \leq M_{B}^{1}(p_n).
$$
Applying  Theorem \ref{3.9} gives that $$
\lim_{n\to \infty}M_{B}^{1}(p_n)=1.
$$
Thus we obtain
$$
\lim_{n\to \infty}||p_n||_{0}=1.
$$
\begin{rem}Consider the weighted Bergman space\label{r4.7}
$$
L_a^2 (\mathbb{D},\rho(|z|^2)\mathrm{d}A(z))
=\bigg\{f: f \text{is analytic on } \mathbb{D}  , \int_{\mathbb{D}}|f(z)|^2\rho(|z|^2)\,\mathrm{d}A(z)<\infty\bigg\}.
$$
 In {\upshape \cite[Example 1]{Hu}}, the quantity $\|p\|_\rho$ is defined by
$$
\|p\|_\rho \triangleq\exp\bigg(\int_{\mathbb{D}}\log|p(z)|\rho(|z|^2)\,\mathrm{d}A(z)\bigg).
$$
 Then by the same reasoning as above, 
one gets
$$
\lim_{n\to \infty}||p_n||_{\rho}=1.
$$
This result was first obtained in {\upshape \cite[Corollary 4]{Hu}}. However, the proof presented here is an operator-theoretic approach,
 which is quite different from that of {\upshape \cite[Corollary 4]{Hu}}.
\end{rem}


\textbf{Acknowledgements:}   The authors thank the referee for helpful suggestions which make this paper more readable. We also wish to express our thanks to Professor X.~Fang  for his interest and suggestions.

%

\end{document}